\newtheorem{theorem}{Theorem}[section]
\newtheorem{cor}[theorem]{Corollary}
\newenvironment{conj}{\preconj\rm}{\endpreconj}
\newcommand{\End}{\mathop{\mathrm{End}}}
\newcommand{\Aut}{\mathop{\mathrm{Aut}}}
\renewcommand\le{\leqslant}
\renewcommand\ge{\geqslant}
\begin{document}
\title{Synchronization and separation\\ in the Johnson schemes}
\author{Mohammed Aljohani${}^{a}$, John Bamberg${}^{b}$ and
Peter J. Cameron${}^{a}$\footnote{Corresponding author:
\texttt{pjc20@st-andrews.ac.uk}}}
\date{\small
${}^{a}$ School of Mathematics and Statistics, University of St Andrews,
North Haugh, St Andrews, Fife KY16 9SS, U.K.\\
${}^{b}$ School of Mathematics and Statistics, University of Western Australia,
Crawley, Perth WA 6009, Australia}
\maketitle

\begin{abstract}
Recently Peter Keevash solved asymptotically the existence question for
Steiner systems by showing that $S(t,k,n)$ exists whenever the necessary
divisibility conditions on the parameters are satisfied and $n$ is sufficiently
large in terms of $k$ and $t$. The purpose of this paper is to make a
conjecture which if true would be a significant extension of Keevash's
theorem, and to give some theoretical and computational evidence for the
conjecture.

We phrase the conjecture in terms of the notions (which we define here) of
synchronization and separation for association schemes. These definitions are
based on those for permutation groups which grow out of the theory of
synchronization in finite automata. In this theory, two classes of permutation
groups (called \emph{synchronizing} and \emph{separating}) lying between
primitive and $2$-homogeneous are defined. A big open question is how the
permutation group induced by $S_n$ on $k$-subsets of $\{1,\ldots,n\}$ fits
in this hierarchy; our conjecture would give a solution to this problem for
$n$ large in terms of $k$. We prove the conjecture in the case $k=4$: our
result asserts that $S_n$ acting on $4$-sets is separating for $n\ge10$
(it fails to be synchronizing for $n=9$).

\textit{MSC classification:} Primary 20B15; secondary 05E30, 20M35
\end{abstract}

\section{The definitions}

The concepts of synchronization and separation were defined for permutation
groups about ten years ago (see~\cite{acs}). We outline the  definitions here
and extend them to association schemes.

A (finite-state deterministic) automaton is a machine which can be in one of a
finite set $\Omega$ of internal states. On reading a letter from the alphabet
associated with the automaton, it undergoes a change of state. If the machine
reads a word (a finite sequence of letters), the corresponding state changes
are composed.

An automaton can also be regarded as a directed graph whose arcs are labelled
with the letters in the alphabet, having the property that there is a unique
arc with each label leaving any vertex. (Loops and multiple arcs are allowed.)
Alternatively, it can be regarded as a transformation semigroup on the set
$\Omega$ with a distinguished set of generators (the transformations which
correspond to single letters). 

An automaton is said to be \emph{synchronizing} if there is a word in its
alphabet (called a \emph{reset word}) such that, after reading this word, the
automaton is in a known state, regardless of its starting state. In the
semigroup interpretation, an automaton is synchronizing if the semigroup
contains a transformation of rank~$1$ (one whose image consists of a single
element).

Clearly it is not possible for a permutation group to be synchronizing if
$|\Omega|>1$, since any composition of permutations is a permutation. So we
abuse terminology and say that a permutation group $G$ on $\Omega$ is
\emph{synchronizing} if, for any transformation $a$ of $\Omega$ which is not
a permutation, the semigroup $\langle G,a\rangle$ is a synchronizing semigroup.
(In other words, the automaton whose transitions are generators of $G$
together with an arbitrary non-permutation is synchronizing.)

There is a combinatorial characterisation as follows. A \emph{$G$-transversal}
for a partition $P$ of $\Omega$ is a set $A$, all of whose images under $G$ are
transversals for $P$. Now the group $G$ is non-synchronizing if there is a nontrivial
partition for which all images of some set $A$ are $G$-transversals. (The trivial partitions are the partition into singletons and the partition with a single part.)

This can also be formulated in terms of simple graphs (loopless and without 
multiple edges). The \emph{clique number} of a graph is the number of vertices
in a largest complete subgraph, and the \emph{chromatic number} is the least
number of colours required to colour the vertices so that the ends of any edge
receive different colours.
An \emph{endomorphism} of a graph is a transformation on the vertex set of the
graph which maps edges to edges. The endomorphisms of a graph $\Gamma$ form a
transformation semigroup $\End(\Gamma)$. The following theorem can be found in
\cite{acs}.

\begin{theorem}\leavevmode
\begin{enumerate}
\item
A transformation semigroup $S$ is non-synchronizing if and only if there exists
a non-null graph $\Gamma$ such that $S\le\End(\Gamma)$ and the clique number
and chromatic number of $\Gamma$ are equal.
\item
A permutation group $G$ is non-synchronizing if and only if there exists a
graph $\Gamma$, neither null nor complete, such that $G\le\Aut(\Gamma)$ and
the clique number and chromatic number of $\Gamma$ are equal.
\end{enumerate}
\end{theorem}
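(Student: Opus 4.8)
The plan is to attach to each transformation semigroup a canonical graph and to read off its clique and chromatic numbers directly from the algebraic structure. For a transformation semigroup $S$ on $\Omega$, let $\Gamma_S$ be the graph on vertex set $\Omega$ in which $x$ and $y$ are joined precisely when no element of $S$ maps them to a common point (that is, $xs\ne ys$ for all $s\in S$). I would first check that $S\le\End(\Gamma_S)$: if $\{x,y\}$ is an edge and $s\in S$, then $xs\ne ys$ (otherwise $s$ itself collapses the pair), and no $t\in S$ can collapse $xs,ys$ (otherwise $st$ collapses $x,y$), so $\{xs,ys\}$ is again an edge. For part (a) the backward implication is then immediate and does not even use $\omega=\chi$: a rank-$1$ (constant) map sends some edge to a loop, so it can never be an endomorphism of a non-null graph; hence $S\le\End(\Gamma)$ with $\Gamma$ non-null already forces $S$ to contain no rank-$1$ map, i.e.\ to be non-synchronizing.

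The real content of (a) is the forward implication. Assuming $S$ is non-synchronizing, I would choose an element $s\in S$ of minimal rank $r$, where non-synchronization gives $r\ge2$. First, the image $I=\Omega s$ is a clique of $\Gamma_S$ of size $r$: if some $t\in S$ collapsed two points of $I$, then $st$ would have rank $|It|<r$, contradicting minimality, so every element of $S$ is injective on $I$ and $\omega(\Gamma_S)\ge r$. Second, the $r$ kernel-classes of $s$ form a proper $r$-colouring, since an edge $\{x,y\}$ cannot satisfy $xs=ys$; hence $\chi(\Gamma_S)\le r$. Combined with the universal inequality $\omega\le\chi$ this forces $\omega(\Gamma_S)=\chi(\Gamma_S)=r$, and $\Gamma_S$ is non-null because $r\ge2$; this proves (a).

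For part (b) I would deduce the group statement from (a). By definition $G$ is non-synchronizing exactly when some non-permutation $a$ makes $\langle G,a\rangle$ a non-synchronizing semigroup, so by (a) there is a non-null $\Gamma$ with $\langle G,a\rangle\le\End(\Gamma)$ and $\omega=\chi$. Since $G$ is a group sitting inside $\End(\Gamma)$, each $g$ and its inverse preserve edges, so $G\le\Aut(\Gamma)$; and $\Gamma$ cannot be complete, because every endomorphism of a complete graph is injective whereas $a$ is not. Conversely, given $\Gamma$ neither null nor complete with $G\le\Aut(\Gamma)$ and $\omega=\chi=k$, I would manufacture the required non-permutation by folding onto a maximum clique: fix a maximum clique $K$ and an optimal $k$-colouring, and let $a$ send each vertex to the unique vertex of $K$ sharing its colour (well-defined because the $k$ pairwise-adjacent vertices of $K$ realize all $k$ colours, one each). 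This $a$ is an endomorphism (an edge joins differently-coloured vertices, which map to distinct, hence adjacent, vertices of $K$) and a non-permutation (its image $K$ has size $k<|\Omega|$ as $\Gamma$ is not complete); then $\langle G,a\rangle\le\End(\Gamma)$ is non-synchronizing, witnessing that $G$ is non-synchronizing.

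The main obstacle is the forward direction of (a): everything hinges on the single clean observation that a minimal-rank element simultaneously controls both invariants, its image being a clique (forced by minimality) and its kernel a colouring, both of the same size $r$. Verifying that the image is genuinely a clique — equivalently, that every element of $S$ is injective on it — is precisely where minimality of the rank is indispensable, and this is the step I would be most careful to get right; the passage to groups in (b) is then essentially bookkeeping around the complete-graph case and the clique-folding map.
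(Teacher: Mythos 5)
Your proof is correct, and it is essentially the standard argument: the paper itself gives no proof of this theorem (it quotes it from the cited survey of Ara\'ujo, Cameron and Steinberg), and your construction of the graph $\Gamma_S$ in which $x\sim y$ precisely when no element of $S$ collapses the pair, together with the observation that a minimal-rank element yields both an $r$-clique (its image, by minimality of rank) and a proper $r$-colouring (its kernel classes), is exactly the argument used there. The deduction of the group case (b), including the clique-folding endomorphism built from a maximum clique and an optimal colouring, likewise follows the standard route, so there is nothing to flag.
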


In terms of the previous characterisation, the partition of $\Omega$ is given
by a colouring of $\Gamma$ with the minimum number of colours, and a
$G$-transversal is a clique with size equal to this number.

A related concept is also defined in \cite{acs}. A transitive permutation group
$G$ on $\Omega$ is \emph{non-separating} if there exist subsets $A$ and $B$
of $\Omega$ with $|A|,|B|>1$ such that $|Ag\cap B|=1$ for all $g\in G$ (this
entails $|A|\cdot|B|=|\Omega|$); it is \emph{separating} otherwise. Again, it
is known that a permutation group $G$ is non-separating if and only if there is
a graph $\Gamma$, neither null nor complete, such that the product of its
clique number and its coclique number (the size of the largest coclique) is
$|\Omega|$, and with $G\le\Aut(\Gamma)$. (In the definition, we take $A$ and
$B$ to be a clique and a coclique of maximum size.)

It is easy to see that a separating group is synchronizing. (If $G$ is 
transitive and non-synchronizing, take $P$ to be a nontrivial partition and $A$
a $G$-transversal for $P$; then $|Ag\cap B|=1$ and $|A|\cdot|B|=|\Omega|$ for
every part $B$ of $P$.) The converse is false, but examples are not easy to
come by.

The following theorems are proved in \cite{acs}.

\begin{theorem}
Let $n\ge5$, and let $G$ be the permutation group induced by the symmetric
group $S_n$ on the set of $2$-element subsets of $\{1,\ldots,n\}$. Then the
following are equivalent:
\begin{enumerate}
\item $G$ is synchronizing;
\item $G$ is separating;
\item $n$ is odd.
\end{enumerate}
\end{theorem}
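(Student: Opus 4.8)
The plan is to exploit the very restricted orbit structure of $G$. The group $S_n$ acting on $2$-subsets of $\{1,\dots,n\}$ has rank $3$: two distinct $2$-subsets either meet in a single point or are disjoint. Hence the only $G$-invariant graphs, apart from the null and complete graphs, are the \emph{triangular graph} $T(n)$ (the line graph $L(K_n)$, in which two $2$-sets are joined when they intersect) and its complement, the \emph{Kneser graph} $K(n,2)$ (two $2$-sets joined when disjoint). By the graph characterisations of synchronization and separation quoted above, the whole problem reduces to computing the clique, coclique and chromatic numbers of these two graphs, and the parity of $n$ will enter through a single classical fact.

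First I would record the elementary parameters. A clique in $T(n)$ is a family of pairwise intersecting edges of $K_n$, that is, a star or a triangle, so for $n\ge5$ the largest is a star and $\omega(T(n))=n-1$; dually a coclique in $T(n)$ is a matching, giving $\alpha(T(n))=\omega(K(n,2))=\lfloor n/2\rfloor$. The key step is the chromatic number of $T(n)$: since $T(n)=L(K_n)$, a proper colouring is exactly a proper edge-colouring of $K_n$, so $\chi(T(n))=\chi'(K_n)$. By the classical evaluation of the chromatic index of the complete graph ($K_n$ has a $1$-factorisation when $n$ is even, while a parity/counting argument forces an extra colour when $n$ is odd), we obtain $\chi(T(n))=n-1$ for $n$ even and $\chi(T(n))=n$ for $n$ odd. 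Therefore $\omega(T(n))=\chi(T(n))$ \emph{if and only if $n$ is even}; this single equivalence drives everything.

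For synchronization I would argue as follows. If $n$ is even, then $T(n)$ has equal clique and chromatic numbers and is neither null nor complete, so by the theorem $G$ is non-synchronizing. If $n$ is odd, I must check that \emph{neither} invariant graph can play this role. For $T(n)$ we have $\omega=n-1<n=\chi$. For $K(n,2)$ we have $\omega=\lfloor n/2\rfloor=(n-1)/2$, while the trivial bound $\chi\ge|\Omega|/\alpha=\binom{n}{2}/(n-1)=n/2$ forces $\chi(K(n,2))\ge(n+1)/2>(n-1)/2=\omega$ because $n$ is odd. Hence no suitable graph exists and $G$ is synchronizing. This proves (a)$\Leftrightarrow$(c); note that it also avoids any appeal to Lov\'asz's theorem on Kneser graphs, since the counting bound on $\chi$ already suffices.

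For separation I would use the product criterion directly. Both invariant graphs satisfy $\omega\cdot\alpha=(n-1)\lfloor n/2\rfloor$ (the two factors simply swap between $T(n)$ and $K(n,2)$), whereas $|\Omega|=\binom{n}{2}=n(n-1)/2$. These coincide precisely when $\lfloor n/2\rfloor=n/2$, that is, when $n$ is even. Thus $G$ is non-separating for $n$ even and separating for $n$ odd, giving (b)$\Leftrightarrow$(c) and closing the cycle (recall that (b)$\Rightarrow$(a) holds in general). The one genuinely non-formal ingredient is the chromatic index of $K_n$; everything else is bookkeeping, so I expect no real obstacle beyond correctly isolating that classical parity phenomenon, together with the structural observation $T(n)=L(K_n)$, as the source of the odd/even split.
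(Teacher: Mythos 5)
Your proof is correct, and it is worth noting that the paper does not actually prove this theorem itself: it is quoted from the survey \cite{acs}. Your argument --- reducing via the rank-$3$ orbit structure to the two nontrivial $G$-invariant graphs $T(n)=L(K_n)$ and the Kneser graph $K(n,2)$, invoking the chromatic index of $K_n$ (one-factorisations for even $n$, the parity obstruction for odd $n$), and checking the clique--coclique products $(n-1)\lfloor n/2\rfloor$ against $\binom{n}{2}$ --- is essentially the standard proof given in that reference, with the pleasant economy that your counting bound $\chi\ge\binom{n}{2}/(n-1)$ on the Kneser side avoids any appeal to Lov\'asz's theorem.
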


\begin{theorem}
Let $n\ge7$, and let $G$ be the permutation group induced by the symmetric
group $S_n$ on the set of $3$-element subsets of $\{1,\ldots,n\}$. Then the
following are equivalent:
\begin{enumerate}
\item $G$ is synchronizing;
\item $G$ is separating;
\item $n$ is congruent to $2$, $4$ or $5\pmod{6}$ and $n>8$.
\end{enumerate}
\end{theorem}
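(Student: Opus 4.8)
The plan is to combine the graph-theoretic criteria quoted above with the implication that separating groups are synchronizing. Since $G=S_n$ is transitive on the set $\Omega$ of $3$-subsets, every graph admitting $G$ as a group of automorphisms is a union of orbital graphs, and for $S_n$ on $3$-sets the orbitals are indexed by the intersection number $|A\cap B|\in\{0,1,2\}$. Writing $\Gamma_S$ (for $S\subseteq\{0,1,2\}$) for the graph in which $A\sim B$ when $|A\cap B|\in S$, the graphs that are neither null nor complete are the six graphs with $\emptyset\ne S\ne\{0,1,2\}$, occurring in the three complementary pairs $\Gamma_{\{2\}}/\Gamma_{\{0,1\}}$, $\Gamma_{\{1\}}/\Gamma_{\{0,2\}}$ and $\Gamma_{\{0\}}/\Gamma_{\{1,2\}}$. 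Using that (b) implies (a), I would reduce the theorem to the two implications (c)$\Rightarrow$(b) and (a)$\Rightarrow$(c); together with (b)$\Rightarrow$(a) these close the cycle (c)$\Rightarrow$(b)$\Rightarrow$(a)$\Rightarrow$(c), giving all three equivalences. Throughout, $\omega$, $\chi$ and $\alpha$ denote clique, chromatic and coclique number.

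To establish (c)$\Rightarrow$(b), assume $n\equiv2,4,5\pmod6$ and $n>8$, so $n\ge10$. Each $\Gamma_S$ is vertex-transitive, so the clique-coclique bound gives $\omega(\Gamma_S)\,\alpha(\Gamma_S)\le|\Omega|=\binom n3$, and by the separation criterion I need strict inequality for all six graphs, i.e.\ for all three complementary pairs. For the Johnson graph $\Gamma_{\{2\}}$ I would use $\omega=n-2$ (triples through a fixed pair) and $\alpha=D(2,3,n)$, the maximum size of a partial Steiner system; as no $S(2,3,n)$ exists for these residues, $D(2,3,n)<n(n-1)/6$ and the product is below $\binom n3$. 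For the Kneser graph $\Gamma_{\{0\}}$ I would use $\omega=\lfloor n/3\rfloor$ and, by Erd\H os--Ko--Rado, $\alpha=\binom{n-1}2$; since $3\nmid n$ the product is again below $\binom n3$. The decisive pair is $\Gamma_{\{1\}}/\Gamma_{\{0,2\}}$: a clique of $\Gamma_{\{1\}}$ is a family of $m$ triples pairwise meeting in one point, and counting incidences through each point (where these triples form a sunflower) yields $m\le(3n-7)/2$, while a clique of $\Gamma_{\{0,2\}}$ breaks into point-disjoint ``books'' and $4$-subsets and so has size at most $n$. Hence the product here is at most $n(3n-7)/2$, which is strictly less than $\binom n3$ precisely when $n\ge10$. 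This is where the hypothesis $n>8$ is sharp: at $n=8$ two disjoint $4$-sets give $\omega(\Gamma_{\{0,2\}})=8$ and a Fano plane gives $\omega(\Gamma_{\{1\}})=7$, so the product equals $56=\binom83$ and the group is non-separating.

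For (a)$\Rightarrow$(c) I would prove the contrapositive: whenever $n\equiv0,1,3\pmod6$, or $n=8$, I exhibit a graph $\Gamma_S$ with $\omega(\Gamma_S)=\chi(\Gamma_S)$, which makes $G$ non-synchronizing. Since $\omega\le\chi$ always, it suffices to display a clique and a proper colouring of the same size. When $3\mid n$ (that is, $n\equiv0,3\pmod6$) I would take $\Gamma_{\{1,2\}}$: its cliques are intersecting families, so $\omega=\binom{n-1}2$, and Baranyai's theorem partitions all triples into $\binom{n-1}2$ parallel classes, each a coclique, giving $\chi=\binom{n-1}2$. When $n\equiv1\pmod6$ with $n\ge13$ I would take the Johnson graph $\Gamma_{\{2\}}$ with $\omega=n-2$, and use a large set of disjoint Steiner triple systems (known to exist for all admissible $n\ne7$) to partition the triples into $n-2$ cocliques, so $\chi=n-2$. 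The two remaining values are handled with $\Gamma_{\{1\}}$: for $n=7$ the seven Fano lines form a clique, and as $L$ runs over the lines the sets consisting of the four triples inside $\overline L$ together with $L$ itself form a proper $7$-colouring; for $n=8$ the seven complementary pairs of blocks of the Steiner system $S(3,4,8)$ give seven cocliques (each a pair of disjoint $4$-sets) that properly $7$-colour $\Gamma_{\{1\}}$, matching the $7$-clique from a Fano subplane.

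The step I expect to be the main obstacle is the decisive pair $\Gamma_{\{1\}}/\Gamma_{\{0,2\}}$ in the separating direction: the whole threshold $n>8$ rests on the two extremal estimates $\omega(\Gamma_{\{1\}})\le(3n-7)/2$ and $\omega(\Gamma_{\{0,2\}})\le n$ being simultaneously strong enough to push the product below $\binom n3$ for every $n\ge10$ and yet tight enough to leave $n=8$ as a genuine equality, so that the bound is sharp. A secondary difficulty is that the non-synchronizing direction is not uniform: it relies on the deep design-existence results of Baranyai and of Teirlinck on large sets of Steiner triple systems, and the two sporadic orders $n=7$ and $n=8$ need the bespoke colourings above, coming respectively from the Fano plane and from $S(3,4,8)$.
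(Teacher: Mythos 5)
Your proposal is correct, but one thing to note first: the paper contains no proof of this statement at all --- it is quoted from the survey \cite{acs} (``The following theorems are proved in \cite{acs}''), so the only internal comparison available is with the paper's general framework and with its proof of the analogous $k=4$ theorem in Section~\ref{kis4}. Measured against that, your route is genuinely different: where the paper attacks $k=4$ with Delsarte's machinery (the dual eigenvalue matrix $Q$, the orthogonality of MacWilliams transforms in Corollary~\ref{Delsarte_cor}, the ratio bound of Corollary~\ref{cor:ratiobound}, plus GAP/GRAPE for residual small cases), you handle $k=3$ by elementary extremal set theory alone. I checked the two estimates your ``decisive pair'' rests on, and both hold. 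For $\Gamma_{\{0,2\}}$, your books-and-$4$-sets decomposition is exactly Step 1 of the paper's Theorem 4.1, i.e.\ Corollary~\ref{nbound} specialised to $k=3$, giving the bound $n$. For $\Gamma_{\{1\}}$, the sunflower count does work: fixing one triple $T_0=\{a,b,c\}$ in the family, the triples through a fixed point $x$ pairwise meet only at $x$, so $d_x\le(n-1)/2$, whence $m\le 1+\sum_{x\in T_0}(d_x-1)\le(3n-7)/2$; and $n(3n-7)/2<\binom{n}{3}$ is equivalent to $n^2-12n+23>0$, which holds exactly for $n\ge10$ --- precisely your threshold, with the $n=8$ equality $7\cdot 8=56=\binom{8}{3}$ witnessed as you say by a Fano plane and the eight triples inside a complementary pair of $4$-sets. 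The other two complementary pairs are correctly dispatched: equality in the Johnson pair forces $\alpha=n(n-1)/6$, hence an $S(2,3,n)$, impossible for $n\equiv2,4,5\pmod6$; and the Kneser pair gives $\lfloor n/3\rfloor\binom{n-1}{2}<\binom{n}{3}$ since $3\nmid n$. Your non-synchronizing direction reuses the paper's own Section~\ref{s3} colourings for $n=7$ and $n=8$ essentially verbatim (the $S(3,4,8)$ parallel-class colouring is the paper's ``another way of viewing this''), together with Baranyai for $3\mid n$ and Lu--Teirlinck for $n\equiv1\pmod6$, $n\ge13$; the case analysis over $n\ge7$ is complete, and the cycle (c)$\Rightarrow$(b)$\Rightarrow$(a)$\Rightarrow$(c), with (b)$\Rightarrow$(a) the general fact stated in Section 1, is logically sound. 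What each approach buys: yours is self-contained and computer-free, exploiting the transparency of $k=3$, where exact-one-point-intersecting families yield to a two-line counting argument; the paper's $Q$-matrix method is much heavier but scales to $k=4$ and beyond, where no such direct structural classification is available.
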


The aim of this paper is to extend these results to larger values of $k$; we
achieve a complete result for $k=4$ (using methods based on the work of
Delsarte on association schemes, which will hopefully extend to larger values
of $k$) and a conjecture about the situation when $n$ is sufficiently large
in terms of $k$.

\section{Steiner systems and the main conjecture}

An \emph{association scheme} is a collection $\{A_0,\ldots,A_s\}$ of symmetric
zero-one matrices, with $A_0=I$, whose sum is the all-$1$ matrix, and having
the property that the linear span (over the real numbers) of the matrices is
an algebra (closed under matrix multiplication), called the \emph{Bose--Mesner
algebra} of the scheme. See \cite{rab,delsarte} for further details. (Note that
Delsarte uses a slightly more general definition, but his important examples
fit the definition given here.)

Given an association scheme as above, each matrix $A_i$ for $i>0$ is the
adjacency matrix of a graph, as indeed are the sums of some of these matrices.
For $I\subseteq\{1,\ldots,s\}$, we denote by $\Gamma_I$ the graph whose
adjacency matrix is $\sum_{i\in I}A_i$. Such a graph will be called
\emph{non-trivial} if it is neither complete nor null, that is, if 
$I\ne\varnothing$ and $I\ne\{1,\ldots,s\}$. Note that the complement of the graph
$\Gamma_I$ is $\Gamma_{\{1,\ldots,s\}\setminus I}$. An important property of
graphs in an association scheme is that, in any such graph, the product of
the clique number and the coclique number is at most the number of
vertices. (They share this property with vertex-transitive graphs; neither
class of graphs includes the other.)

Adapting the definitions from permutation group theory given above, we say
that an association scheme is \emph{non-synchronizing} if there is a
non-trivial graph in the scheme with clique number equal to chromatic number,
and is \emph{synchronizing} otherwise; moreover, the scheme is
\emph{non-separating} if there is a non-trivial graph in the scheme such that
the product of its clique number and its coclique number is equal to the
number of vertices, and is \emph{separating} otherwise.

In this paper we are concerned with the \emph{Johnson scheme}. This is the
association scheme $J(n,k)$ whose vertices are the $k$-element subsets of
an $n$-element set (without loss of generality $\{1,\ldots,n\}$); for each
$i$ with $0\le i\le k$, we take the matrix with rows and columns indexed by
the vertices, having $(A,B)$ entry $1$ if $|A\cap B|=i$, and $0$ otherwise.
It is convenient to change the order of the indices by calling this matrix
$A_{k-i}$. Then $A_0$ is the identity matrix, and the other matrices in the
scheme are $A_1,\ldots,A_k$. For $I\subseteq\{1,\ldots,k\}$, we let
$A_I=\sum_{i\in I}A_i$, and let $\Gamma_I$ be the graph with adjacency matrix
$A_I$. 

While this numbering seems a little odd, our notation is chosen to agree with
that of Delsarte~\cite{delsarte}, who considered this scheme in detail and
gave the eigenvalues of the matrices in the scheme in terms of Eberlein
polynomials, as we will discuss.

Two particular cases will be very important, so we introduce a different
notation for them:
\begin{itemize}\itemsep0pt
\item $\Delta_t(n,k)=\Gamma_{\{k-t+1,\ldots,k\}}(n,k)$, the graph in which two
$k$-sets are joined if they intersect in fewer than $t$ points;
\item $\Phi_t(n,k)=\Gamma_{\{1,\ldots,k-t\}}(n,k)$, the complement of
$\Delta_t(n,k)$, in which two $k$-sets are joined if they intersect in at
least $t$ points.
\end{itemize}
\medskip

A \emph{Steiner system} $S(t,k,n)$, for $0<t<k<n$, is a collection
$\mathcal{B}$ of $k$-subsets of $\{1,\ldots,n\}$ with the property that any
$t$-subset of $\{1,\ldots,n\}$ is contained in a unique member of
$\mathcal{B}$. It is well-known that, for $0\le i\le t-1$, the number of
members of $\mathcal{B}$ containing an $i$-subset of $\{1,\ldots,n\}$ is
$\binom{n-i}{t-i}/\binom{k-i}{t-i}$, independent of the choice of $i$-set.
Thus necessary conditions for the existence of $S(t,k,n)$ are that
\[\binom{k-i}{t-i}\hbox{ divides }\binom{n-i}{t-i}\hbox{ for }0\le i\le t-1.\]
We refer to these as the \emph{divisibility conditions}. In a remarkable recent
result, Keevash~\cite{keevash} showed:

\begin{theorem}
There exists a function $F$ such that, if $n\ge F(t,k)$ and the divisibility
conditions are satisfied, then a Steiner system $S(t,k,n)$ exists.
\end{theorem}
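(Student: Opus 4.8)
The plan is to recast the existence of $S(t,k,n)$ as a perfect matching problem and attack it with the absorption method. Let $H$ be the $\binom{k}{t}$-uniform hypergraph whose vertex set is the collection of $t$-subsets of $\{1,\ldots,n\}$, and whose edges are, for each $k$-subset $K$, the set of all $t$-subsets contained in $K$. A Steiner system $S(t,k,n)$ is precisely a perfect matching in $H$: a set of edges covering every vertex exactly once. The divisibility conditions are exactly the natural arithmetic obstructions to such a matching, since they guarantee that the relevant link counts are integers at every level $0\le i\le t-1$; so the content of the theorem is that for $n\ge F(t,k)$ these local conditions suffice. Because $H$ is highly regular and, for large $n$, behaves pseudorandomly, one expects a matching to exist. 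The difficulty is producing one that is genuinely perfect rather than merely near-perfect.

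First I would use the Rödl nibble (a semi-random greedy process) to construct an approximate Steiner system: choosing $k$-sets in successive small random batches and discarding any that clash with previously chosen ones, one covers all but a $o(1)$ proportion of the $t$-sets, and standard concentration inequalities show that the uncovered \emph{leave} is sparse and quasirandomly distributed. This reduces the problem to completing an almost-perfect partial design to an exact one using only the few remaining $t$-sets.

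The heart of the argument, and the main obstacle, is absorption. Before running the nibble I would set aside a carefully engineered family of $k$-sets, an \emph{absorbing structure}, reserved so as not to interfere with the nibble. The property required is that for every sufficiently small collection $L$ of leftover $t$-sets satisfying the divisibility conditions, the absorber can be reconfigured, by swapping some of its reserved $k$-sets for alternatives, so that the result covers exactly $L$ together with the $t$-sets the absorber originally covered. Constructing such a flexible structure is the crux: following Keevash's randomised algebraic construction, one builds a template from an algebraic object (for instance a vector space or group over a finite field) whose rich symmetry supplies the many local swaps needed, and then uses a probabilistic argument to embed the template into $H$ while keeping the divisibility intact at every intermediate stage.

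Finally one must reconcile the two phases, ensuring that the leave produced by the nibble is small enough, and sufficiently structured (in particular divisibility-respecting relative to the absorber), to fall within the absorber's range; this may require interposing a \emph{cover-down} step that iterates the nibble to shrink and regularise the leave before absorption. I expect the genuinely hard steps to be (i) the algebraic construction of absorbers with enough flexibility to handle an arbitrary admissible leave, and (ii) the bookkeeping that preserves exact divisibility throughout, since the elementary near-matching arguments deliver only approximate designs, and it is the insistence on a perfect, exactly-once covering that makes the problem deep.
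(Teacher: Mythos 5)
This statement is not proved in the paper at all: it is Keevash's theorem, quoted verbatim with a citation to \cite{keevash}, and the authors use it as a black box. So the comparison to make is with Keevash's own argument, and on that score your outline is faithful to it: the reduction to a perfect matching in the auxiliary $\binom{k}{t}$-uniform hypergraph on $t$-subsets, the R\"odl nibble to produce a near-perfect partial design with a sparse quasirandom leave, the cover-down iteration, and absorption via a template built by the randomised algebraic construction are exactly the architecture of Keevash's proof. Your diagnosis of where the difficulty lies (the flexibility of the absorber and the preservation of exact divisibility) is also accurate.

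However, as a proof attempt this has a genuine gap rather than a complete argument: the entire mathematical content of the theorem resides in the two steps you explicitly defer. The statement ``following Keevash's randomised algebraic construction, one builds a template \ldots whose rich symmetry supplies the many local swaps needed'' is an appeal to the very result being proved; constructing a template that can absorb an \emph{arbitrary} small divisibility-respecting leave, proving that the required local modifications exist inside it, and embedding it into $H$ without disturbing the nibble or the divisibility at intermediate stages occupies the bulk of Keevash's long paper and admits no short workaround. The nibble and cover-down phases are by now standard, but nothing in your sketch supplies, or even precisely formulates, the key lemma (the existence and absorbing property of the template), so the proposal is a correct roadmap of the known proof rather than a proof. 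For context, a genuinely different route now exists --- the iterative absorption method of Glock, K\"uhn, Lo and Osthus, which avoids the algebraic template by nesting cover-down steps along a vortex of vertex subsets --- and either route would discharge the theorem, but each requires the full machinery of its respective paper; neither can be compressed to the elementary counting arguments used elsewhere in the present paper.
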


The connection with what went before is that the set of blocks of a Steiner
system $S(t,k,n)$ is a clique in the graph $\Delta_t(n,k)$, of size
$\binom{n}{t}/\binom{k}{t}$. Moreover, there is a coclique in this
graph of size $\binom{n-t}{k-t}$, consisting of all the $k$-sets containing
a fixed $t$-set. (We say that such a cocloque is of \emph{EKR type}, after
the theorem of Erd\H{o}s, Ko and Rado, asserting that they are the largest
cocliques provided that $n$ is sufficiently large.) 

Moreover, it is easily checked that
\[\binom{n-t}{k-t}\cdot\binom{n}{t}\Big/\binom{k}{t}=\binom{n}{k}.\]
So, if a Steiner system $S(t,k,n)$ exists, then the product of the clique
number and coclique number in $\Delta_t(n,k)$ is equal to the
number of vertices, and the Johnson scheme $J(n,k)$ is non-separating.

Our main conjecture is that, asymptotically, the converse holds:

\begin{conj}
There is a function $G$ such that, if $n\ge G(k)$ and the Johnson scheme
$J(n,k)$ is non-separating, then there exists a Steiner system $S(t,k,n)$
for some $t$ with $0<t<k$.\label{c1}
\end{conj}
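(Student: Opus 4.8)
The plan is to prove a sharper equivalence that implies the conjecture: for $n$ large in terms of $k$, the scheme $J(n,k)$ is non-separating if and only if some $\Delta_t(n,k)$ (equivalently its complement $\Phi_t(n,k)$) has clique number times coclique number equal to $\binom nk$, and this happens precisely when a Steiner system $S(t,k,n)$ exists for that $t$. The implication from Steiner systems to non-separation is already recorded in the excerpt, so only the converse is at issue. I would split it into two parts: \textbf{(A)} an analysis of the special graphs $\Delta_t$, showing that equality in the clique--coclique bound for $\Delta_t$ is equivalent to the existence of $S(t,k,n)$; and \textbf{(B)} a classification showing that, among all $2^k-2$ non-trivial graphs $\Gamma_I$ in the scheme, only the graphs $\Delta_t$ and $\Phi_t$ can attain the clique--coclique bound once $n$ is large enough. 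Together these give the conjecture: any witness to non-separation must be some $\Delta_t$ or $\Phi_t$, and its extremality exhibits the Steiner system directly.

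Part (A) is the self-contained half. A clique in $\Delta_t$ is a family of $k$-sets meeting pairwise in fewer than $t$ points, i.e.\ a partial Steiner system, so counting the $t$-subsets covered gives $\omega(\Delta_t)\le\binom nt/\binom kt$, with equality exactly when every $t$-set is covered once---that is, when $S(t,k,n)$ exists. Dually, a coclique in $\Delta_t$ is a $t$-intersecting family, so the $t$-intersecting form of the Erd\H{o}s--Ko--Rado theorem (Frankl, Wilson), valid once $n$ is large in terms of $k$, gives $\alpha(\Delta_t)=\binom{n-t}{k-t}$ exactly, attained by an EKR-type coclique. Since $\binom{n-t}{k-t}\cdot\binom nt/\binom kt=\binom nk$, the product $\omega(\Delta_t)\,\alpha(\Delta_t)$ equals $\binom nk$ if and only if the clique meets its packing bound, hence if and only if $S(t,k,n)$ exists. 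Complementation yields the same statement for $\Phi_t$, as it merely interchanges clique and coclique. (As a sanity check, this mechanism reproduces the known $k=2,3$ results: non-separation via $\Delta_t$ corresponds exactly to the divisibility/existence of $S(t,k,n)$ for $t=1,\dots,k-1$.)

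Part (B) is where Delsarte's machinery enters, and is the main obstacle. The idea is to use the explicit eigenvalues of the scheme (the Eberlein polynomials) to apply the ratio (Hoffman) bound to both $\omega(\Gamma_I)$ and $\alpha(\Gamma_I)=\omega(\Gamma_{I^c})$ for every $I$, and to show that the product of these two upper bounds is strictly below $\binom nk$ unless $I$ is an initial interval $\{1,\dots,j\}$ or a final interval $\{j,\dots,k\}$---exactly the sets giving $\Phi_t$ and $\Delta_t$. The difficulty is that this demands control of the least eigenvalue of $A_I$, as a function of both $I$ and $n$, uniformly over all subsets $I$: the Eberlein polynomials are awkward to estimate, their extremal values can be attained at different relations for different $I$, and the estimates must be sharp enough to exclude every non-interval graph without accidentally excluding the interval graphs (for which the ratio bounds are tight, giving product exactly $\binom nk$). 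I expect this uniform asymptotic control to be the crux, and it is presumably why only small $k$ is currently tractable: for $k=4$ the family of subsets $I$ to examine is small enough that the relevant ratio bounds can be computed and compared by hand.

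For general $k$ the purely numerical comparison may be insufficient, since a non-interval graph could conceivably have ratio-bound product equal to $\binom nk$ while its \emph{actual} product is smaller. The fallback I would develop is an equality analysis of the clique--coclique bound itself: in an association scheme, $\omega\,\alpha=n$ forces the clique and coclique to be regular with respect to the scheme (equitable, with prescribed inner distributions in the sense of Delsarte). In the Johnson scheme such regularity of a family of $k$-sets is extremely restrictive, and the programme would be to show that it forces the witnessing graph to be an interval graph and the extremal clique to be a genuine $t$-packing attaining $\binom nt/\binom kt$, i.e.\ a Steiner system. Establishing this structural rigidity---rather than a case-by-case eigenvalue check---is the route I would pursue to push the result beyond $k=4$.
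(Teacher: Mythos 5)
The statement you are trying to prove is a \emph{conjecture} in the paper, not a theorem: the authors prove it only for $k=4$ (Section~\ref{kis4}) and for the single family $I=\{1,k\}$ for general $k$ (Theorem~\ref{specialcase}), and your proposal does not close that gap either. Your Part~(A) is fine and agrees with what the paper records: the clique bound $\omega(\Delta_t)\le\binom{n}{t}/\binom{k}{t}$ with equality exactly at Steiner systems, Wilson's theorem giving $\alpha(\Delta_t)=\binom{n-t}{k-t}$ of EKR type for $n>(t+1)(k-t+1)$, and the product identity. But Part~(B) --- the claim that for $n$ large only the interval graphs $\Delta_t$, $\Phi_t$ can attain the clique--coclique bound --- is precisely the open content of the conjecture, and you explicitly leave it as a programme (``the main obstacle'', ``the route I would pursue''). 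A sketch that defers the crux is not a proof, and no argument in the paper supplies it for general $k$.

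Two concrete points about why your proposed route for Part~(B) is insufficient as stated. First, the case $k=4$, $n=9$ in Section~\ref{s3} shows that a non-interval graph, $\Gamma_{\{1,3\}}(9,4)$, genuinely attains the bound ($9\cdot 14=\binom{9}{4}$, via overlarge sets of $S(3,4,8)$) while no $S(t,4,9)$ exists; so any exclusion of non-interval $I$ can only be asymptotic in $n$, and your uniform ratio-bound comparison would have to degrade gracefully at small $n$ --- the paper handles this by finite computer checks. Second, the paper's own $k=4$ proof demonstrates that ratio (Hoffman) bounds alone do not suffice even asymptotically: the cases $I=\{1,3\}$ and $I=\{1,4\}$ are settled not by comparing ratio-bound products but by the equality conditions of Delsarte's clique--coclique bound (Corollary~\ref{Delsarte_cor}: the Schur product of the MacWilliams transforms of an extremal clique and coclique must vanish in all nonzero coordinates), which yields polynomial equations in the inner-distribution parameters, combined with the combinatorial bound of Corollary~\ref{nbound} on $\{0,k-1\}$-intersecting families. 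This is exactly the ``fallback'' you mention in your last paragraph, so your instincts align with what the authors actually did; but carrying out that rigidity analysis uniformly in $k$ is the unsolved problem, and without it your proposal establishes nothing beyond the implications already proved in the paper.
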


Putting this conjecture together with Keevash's theorem, we can re-formulate
it as follows:

\begin{conj}
There is a function $H$ such that, if $n\ge H(k)$, then the Johnson scheme is
non-separating if and only if the divisibility conditions for $S(t,k,n)$
are satisfied for some $t$ with $0<t<k$.\label{c2}
\end{conj}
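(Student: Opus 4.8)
The plan is to prove the biconditional of Conjecture~\ref{c2} directly, treating the two implications separately; both will hold once $n\ge H(k)$ for a threshold $H(k)$ chosen large enough to absorb Keevash's bound $F(t,k)$ for every $t$ with $0<t<k$, the Erd\H{o}s--Ko--Rado threshold, and the eigenvalue estimates described below. For the backward implication (``divisibility $\Rightarrow$ non-separating''), suppose the divisibility conditions for $S(t,k,n)$ hold for some $t$. For $n\ge F(t,k)$, Keevash's theorem produces a Steiner system $S(t,k,n)$, whose block set is a clique of size $\binom nt/\binom kt$ in $\Delta_t(n,k)$; together with the EKR-type coclique of size $\binom{n-t}{k-t}$ and the identity $\binom{n-t}{k-t}\binom nt/\binom kt=\binom nk$, the clique--coclique bound is met with equality, so $\Delta_t(n,k)$ witnesses that $J(n,k)$ is non-separating. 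This direction is essentially immediate given Keevash.

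The forward implication (``non-separating $\Rightarrow$ divisibility'') carries the real content. Suppose some non-trivial graph $\Gamma_I$ in the scheme satisfies $\omega(\Gamma_I)\alpha(\Gamma_I)=\binom nk$. I would argue in two steps. The second step, which I address first because it is the easier, identifies the two factors for the distinguished family: a clique in $\Delta_t(n,k)$ is a packing in which each $t$-set lies in at most one block, so $\omega(\Delta_t)\le\binom nt/\binom kt$, with equality precisely when every $t$-set is covered, i.e.\ when a Steiner system $S(t,k,n)$ exists; and for $n$ past the Erd\H{o}s--Ko--Rado threshold (the original theorem for $t=1$, Frankl for general $t$) every $t$-intersecting family has size at most $\binom{n-t}{k-t}$, so $\alpha(\Delta_t)\le\binom{n-t}{k-t}$. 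Since the product of these two upper bounds is exactly $\binom nk$, the equality $\omega(\Delta_t)\alpha(\Delta_t)=\binom nk$ forces the clique to meet the packing bound, hence forces a Steiner system $S(t,k,n)$ and in particular the divisibility conditions for that $t$. The complementary graphs $\Phi_t(n,k)$ give the same conclusion with the roles of clique and coclique interchanged.

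The first step of the forward implication is the main obstacle: I must show that, for $n$ large in terms of $k$, tightness of the clique--coclique product can occur only for the distinguished graphs $\Delta_t$ and $\Phi_t$, so that the analysis above applies. For this I would invoke Delsarte's eigenvalue machinery: writing $A_I=\sum_{i\in I}A_i$ and using the Eberlein polynomials to record the eigenvalue of each $A_i$ on every eigenspace of the scheme, I would apply the Hoffman/ratio bound to upper-bound both $\omega(\Gamma_I)$ and $\alpha(\Gamma_I)$, and then show that the product of these two bounds is strictly smaller than $\binom nk$ whenever $I$ lies outside the distinguished family $\{k-t+1,\dots,k\}$ and its complement.

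The difficulty here is twofold, and is exactly what prevents an immediate general proof. First, the number of candidate graphs $\Gamma_I$ grows like $2^k$, so one cannot simply inspect them. Second, the Eberlein-polynomial eigenvalues must be estimated \emph{uniformly} in $I$ and asymptotically in $n$, with enough precision to separate the near-tight distinguished family from all other graphs; a crude bound will not suffice, since several families of $\Gamma_I$ have products approaching $\binom nk$ in leading order. This is precisely the computation we are able to carry out completely for $k=4$, where only the fourteen non-trivial subsets $I$ arise and the relevant eigenvalues are fully explicit. The hope, and the reason we phrase the general statement as a conjecture, is that a sufficiently clean asymptotic analysis of the Eberlein polynomials will allow this strategy to go through for arbitrary $k$ once $n\ge H(k)$.
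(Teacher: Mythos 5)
You have not proved the statement, and neither does the paper: Conjecture~\ref{c2} is presented there as an open problem, obtained from Conjecture~\ref{c1} simply by invoking Keevash's theorem, and it is settled only for $k=4$ (Section~\ref{kis4}, for $n\ge 10$) plus the single family $I=\{1,k\}$ for large $n$ (Theorem~\ref{specialcase}). The parts of your argument that are actually carried out are correct but are exactly the reductions the paper already records: the backward direction (divisibility $\Rightarrow$ Keevash's $S(t,k,n)$ $\Rightarrow$ tightness of the clique--coclique product in $\Delta_t(n,k)$ via the identity $\binom{n-t}{k-t}\binom{n}{t}\big/\binom{k}{t}=\binom{n}{k}$), and the observation that, past Wilson's threshold $n>(t+1)(k-t+1)$, tightness in $\Delta_t$ or $\Phi_t$ forces the clique (resp.\ coclique) to meet the packing bound $\binom{n}{t}\big/\binom{k}{t}$ and hence to be a Steiner system, yielding the divisibility conditions. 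The genuine content --- your ``first step of the forward implication,'' namely that for $n\ge H(k)$ tightness $\omega(\Gamma_I)\alpha(\Gamma_I)=\binom{n}{k}$ can only occur for $I=\{k-t+1,\ldots,k\}$ or its complement --- is precisely the open problem, and you concede it yourself, replacing it with the hope that an asymptotic analysis of the Eberlein polynomials ``will allow this strategy to go through.'' A proof cannot end in a hope; what you have is a strategy outline for the conjecture, not a proof of it.

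Two concrete cautions about the outline itself. First, the tool you name --- the Hoffman/ratio bound applied separately to $\omega(\Gamma_I)$ and $\alpha(\Gamma_I)$, then multiplying --- is demonstrably too weak even at $k=4$: in the paper's proof the ratio bound (via the divisibility condition $\tau\mid\deg(\Gamma)$ of Corollary~\ref{cor:ratiobound}) disposes only of $I=\{1,3,4\}$ and $I=\{1,2,4\}$, while $I=\{1,3\}$ and $I=\{1,4\}$ require the \emph{equality} conditions in Delsarte's clique--coclique bound (Corollary~\ref{Delsarte_cor}, orthogonality of the MacWilliams transforms of the putative clique and coclique), the combinatorial bound of Corollary~\ref{nbound} on $\{0,k-1\}$-intersecting families (resting on the Hilton--Milner theorem in Theorem~\ref{specialcase}), and machine checks of a residual range of $n$. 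Second, your claim that tightness occurs ``only for the distinguished graphs'' is strictly stronger than what Conjecture~\ref{c2} needs (the conjecture survives a tight non-distinguished graph provided a Steiner system exists alongside), and it is known to fail at small parameters: $\Gamma_{\{2\}}$ for $(k,n)=(3,7)$, $\Gamma_{\{1,3\}}$ for $(k,n)=(4,9)$ (Section~\ref{s3}), and $\Gamma_{\{k-1\}}$ at $n=q^2+q+1$ when a projective plane of order $q$ exists. So any uniform-in-$I$ asymptotic analysis must also come with effective thresholds that exclude such sporadic configurations --- which is exactly the quantitative difficulty that keeps the statement a conjecture.
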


What about synchronization? We first observe that, for sufficiently large $n$,
the graph $\Delta_t(k,n)$ cannot have clique number equal to chromatic number.
First we note the exact bound in the Erd\H{o}s--Ko--Rado theorem,
proved by Wilson~\cite{wilson}:

\begin{theorem}
Suppose that $n>(t+1)(k-t+1)$. Then a coclique in the graph
$\Delta_t(n,k)$ has size at most $\binom{n-t}{k-t}$, with equality if
and only if it is of EKR type.
\end{theorem}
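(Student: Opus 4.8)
The statement is the exact $t$-intersecting Erd\H{o}s--Ko--Rado theorem, so the first step is to translate it into the language of the scheme: a coclique in $\Delta_t(n,k)$ is a family $\mathcal{F}$ of $k$-subsets any two of which meet in at least $t$ points, and I must show $|\mathcal{F}|\le\binom{n-t}{k-t}$ with equality only when $\mathcal{F}$ is of EKR type. The plan is to use the spectral method inside the Johnson scheme, which is the natural tool here since Delsarte has already supplied the eigenvalues of every relation $A_i$ through the Eberlein polynomials \cite{delsarte}. Recall that $J(n,k)$ has eigenspaces $V_0,\ldots,V_k$ with $\dim V_j=\binom{n}{j}-\binom{n}{j-1}$, and that the eigenvalue of $A_i$ on $V_j$ is a known polynomial in the parameters.

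The key observation is that $\mathcal{F}$, being $t$-intersecting, is a coclique not only of $\Delta_t$ but of every graph $\Gamma=\sum_i w_iA_i$ formed from the relations comprising $\Delta_t$ with arbitrary nonnegative weights $w_i$. I would therefore apply the weighted ratio bound
\[|\mathcal{F}|\le\binom{n}{k}\cdot\frac{-\tau}{d-\tau}\]
to such a graph, where $d$ is its degree (its eigenvalue on $V_0$) and $\tau$ its least eigenvalue, and optimise over the weights; this is exactly Delsarte's linear-programming bound. The heart of this half of the argument is to exhibit weights for which the right-hand side collapses to $\binom{n-t}{k-t}$. Using the Eberlein formulas one checks that the minimising configuration makes $\tau$ the eigenvalue on $V_t$, and the hypothesis $n>(t+1)(k-t+1)$ enters precisely as the inequality guaranteeing that this is genuinely the least eigenvalue, so that the bound is both valid and sharp; for $n$ below this threshold a different eigenspace dominates and the bound exceeds $\binom{n-t}{k-t}$, matching the fact that other families then become extremal.

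The characterisation of equality is where the real work lies. Equality in the ratio bound forces the balanced indicator $\chi_{\mathcal{F}}-\bigl(|\mathcal{F}|/\binom{n}{k}\bigr)\mathbf{1}$ to lie in the eigenspace attaining $\tau$, namely $V_t$, so that $\chi_{\mathcal{F}}\in V_0\oplus V_t$. I would then prove that the only $\{0,1\}$-vectors of weight $\binom{n-t}{k-t}$ in $V_0\oplus V_t$ are indicators of EKR-type families. This generalises the transparent case $t=1$, where $V_0\oplus V_1$ consists of the functions $A\mapsto\sum_{i\in A}c_i+d$ and the only $\{0,1\}$ solutions of the right weight are the stars; for larger $t$ one must describe $V_0\oplus V_t$ concretely as the span of indicators attached to $t$-sets and then use integrality to pin $\mathcal{F}$ down to all $k$-sets through a fixed $t$-set.

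I expect this last integrality step to be the main obstacle. Unlike the linear case, the space $V_t$ has no immediately combinatorial description, so extracting the exact structure of $\mathcal{F}$ from the algebraic membership $\chi_{\mathcal{F}}\in V_0\oplus V_t$ needs an additional idea: a local or shifting argument, or an induction on $t$ that peels off a fixed point. It is also here that the \emph{strict} inequality in the threshold matters, since at $n=(t+1)(k-t+1)$ the Frankl and Ahlswede--Khachatrian families tie the bound and equality is no longer unique; the strict inequality is what allows one to conclude that only the EKR-type families survive.
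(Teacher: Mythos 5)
This statement is Wilson's exact Erd\H{o}s--Ko--Rado theorem, which the paper does not prove at all: it is quoted directly from \cite{wilson}, so the paper's ``proof'' is a citation. Your proposal is therefore an attempt to reconstruct Wilson's argument from scratch, and your outline does follow the route Wilson actually took (a weighted pseudo-adjacency matrix in the Bose--Mesner algebra of $J(n,k)$, the ratio bound, and an equality analysis; see also \cite[Chapter 6]{gm}). But as written it is a road map, not a proof. The two steps that constitute essentially the whole of Wilson's paper are deferred rather than carried out: you never exhibit the weights $w_i$, never verify via the Eberlein polynomials \cite{delsarte} that the resulting least eigenvalue sits where you claim precisely when $n>(t+1)(k-t+1)$, and you explicitly flag the equality characterisation as ``the main obstacle'' without resolving it. Since the bound and the threshold live entirely in those computations, the proposal has a genuine gap at its core.

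Moreover, one specific claim is wrong and would derail the equality analysis if pursued as stated. The least eigenvalue $\tau$ of the optimising matrix cannot be attained on $V_t$ alone: the characteristic vector of an EKR star has nonzero projection onto \emph{every} eigenspace $V_j$ with $0\le j\le t$, and since the star meets the bound with equality (the bound is sharp), each of $V_1,\ldots,V_t$ must lie in the $\tau$-eigenspace. Hence equality forces $\chi_{\mathcal{F}}\in V_0\oplus V_1\oplus\cdots\oplus V_t$, not $\chi_{\mathcal{F}}\in V_0\oplus V_t$. The good news is that this larger space has exactly the combinatorial description you said was missing: it is the column space of the transpose of the $t$-versus-$k$ inclusion matrix, that is, the span of the indicator vectors of the families $\{A: T\subseteq A\}$ as $T$ ranges over $t$-sets. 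The remaining hard work --- showing that a $t$-intersecting family of size $\binom{n-t}{k-t}$ whose indicator lies in this span must be a single star when $n$ strictly exceeds $(t+1)(k-t+1)$ --- is precisely what Wilson supplies and you do not. For the purposes of this paper the correct move is simply to cite \cite{wilson} (or reproduce the argument of \cite[Chapter 6]{gm} in full); what you have written is a plausible plan whose decisive steps are absent and one of whose structural claims needs correcting.
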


\begin{theorem}
For $t<k$, there is no partition of the set of $k$-subsets of an $n$-set
into cocliques of EKR type in the graph $\Delta_t(n,k)$.
\label{t:ekr_col}
\end{theorem}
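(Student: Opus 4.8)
My plan is to reformulate the statement as one about families of $t$-sets, and then to derive a contradiction by playing an exact count off against a packing bound. A partition of the $k$-subsets into cocliques of EKR type is nothing other than a family $\mathcal T$ of $t$-sets such that every $k$-set contains exactly one member of $\mathcal T$: the part containing a $k$-set $K$ is the EKR coclique through the unique $T\in\mathcal T$ with $T\subseteq K$. So I would assume such a family $\mathcal T$ exists and work towards a contradiction, using only the two halves of the ``exactly one'' condition.

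First I extract those two consequences. Counting incident pairs $(T,K)$ with $T\in\mathcal T$ and $T\subseteq K$ in two ways (each $k$-set contributes exactly $1$, and each $t$-set lies in $\binom{n-t}{k-t}$ of the $k$-sets) gives
\[|\mathcal T|=\binom{n}{k}\Big/\binom{n-t}{k-t}=\binom{n}{t}\Big/\binom{k}{t}.\]
Secondly, no $k$-set may contain two members of $\mathcal T$, so any two distinct $T,T'\in\mathcal T$ satisfy $|T\cup T'|>k$, equivalently $|T\cap T'|<2t-k$; note $|\mathcal T|>1$ since $t<k<n$, so such pairs genuinely exist. The argument then splits on the sign of $2t-k$. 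When $k\ge 2t$ the packing condition is vacuous: for distinct $T,T'$ we have $|T\cup T'|\le 2t\le k<n$, so some $k$-set contains both, a contradiction valid for every $n>k$. When $k<2t$ I use the packing condition as a real constraint: since distinct members of $\mathcal T$ meet in fewer than $2t-k$ points, no $(2t-k)$-subset lies in two of them, and counting $(2t-k)$-subsets yields $|\mathcal T|\cdot\binom{t}{2t-k}\le\binom{n}{2t-k}$. Substituting the exact value of $|\mathcal T|$ turns this into
\[\binom{n}{t}\binom{t}{2t-k}\le\binom{k}{t}\binom{n}{2t-k},\]
whose left side grows like $n^{t}$ and whose right side like $n^{2t-k}$; since $k>t$ forces $t>2t-k$, the inequality fails once $n$ is large, which is the contradiction.

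The main obstacle is honesty about the range of $n$. The displayed inequality fails only beyond an explicit threshold (for example $n=4$, $t=2$, $k=3$ does admit the partition coming from $\{\{1,2\},\{3,4\}\}$), so the statement must be read in the regime of interest --- say $n>(t+1)(k-t+1)$, where Wilson's theorem already guarantees that the EKR cocliques are precisely the maximum cocliques --- and I would make the threshold explicit by estimating the two binomial products. A cleaner alternative, avoiding the case split and covering the range $n\ge t+k$, is linear-algebraic: the existence of $\mathcal T$ is equivalent to a zero-one solution $\chi$ of $W^{\top}\chi=\mathbf 1$, where $W$ is the $t$-versus-$k$ inclusion matrix; the uniform vector $\frac{1}{\binom{k}{t}}\mathbf 1$ is always a (non-integral) solution, and by Gottlieb's theorem on inclusion-matrix ranks $W$ has full row rank for $n\ge t+k$, forcing uniqueness and so excluding any zero-one solution. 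I expect the binomial estimate to be routine and the precise hypothesis on $n$ to be the only delicate point.
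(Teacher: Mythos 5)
Your main argument coincides with the paper's proof: the paper likewise dismisses $k\ge 2t$ by noting that two $t$-sets then lie in a common $k$-set, and for $k<2t$ it bounds the number of kernels by the same $(2t-k)$-subset packing count $\binom{n}{2t-k}\big/\binom{t}{2t-k}$, played against the exact number of parts $\binom{n}{t}\big/\binom{k}{t}$. The one place you are lighter than the paper is the finish: where you invoke degree-in-$n$ asymptotics and defer the explicit threshold, the paper proves the strict inequality outright via the identity $\binom{n}{t}\binom{t}{2t-k}\big/\binom{k}{t}=\binom{n}{2t-k}\,\frac{t!\,(n-2t+k)!}{k!\,(n-t)!}$ and the observation that the ratio exceeds $1$ once $n\ge 2k$ (in fact $n>2t$ suffices); this is the routine estimate you anticipated, so no real gap. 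Your caveat about small $n$ is correct and well taken: the theorem as stated carries no hypothesis on $n$, yet your example $\{\{1,2\},\{3,4\}\}$ for $(t,k,n)=(2,3,4)$ shows one is needed, and the paper's own proof silently assumes $n\ge 2k$ --- you have identified a genuine, if minor, imprecision in the statement. Finally, your inclusion-matrix alternative is genuinely different from anything in the paper: reformulating the partition as a zero-one solution of $W^{\top}\chi=\mathbf{1}$, noting the uniform vector $\frac{1}{\binom{k}{t}}\mathbf{1}$ is always a solution, and using Gottlieb's full-rank theorem for $n\ge t+k$ to force uniqueness kills the statement in one stroke, with no case split and on a wider range of $n$ than the counting proof; its only cost is the external appeal to Gottlieb, whereas the paper's argument is self-contained counting.
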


\begin{proof}
First we note that, if $k\ge2t$, then any two $t$-sets are contained in some
$k$-set; so sets of EKR type intersect. So we may assume that $k<2t$.

A set $S$ of EKR type consists of all the $k$-sets containing a fixed $t$-set
$T$, which we call its \emph{kernel}; we denote $S$ by $S(T)$. Now, if
$T_1$ and $T_2$ are $t$-sets with $|T_1\cap T_2|\ge 2t-k$, then
$|T_1\cup T_2|\le k$, and so $S(T_1)\cap S(T_2)\ne\varnothing$. So, in a family
of pairwise disjoint sets of EKR type, the kernels are $t$-sets which intersect
in at most $2t-k-1$ points.

The number of kernels in such a collection is at most
$\binom{n}{2t-k}/\binom{t}{2t-k}$; so we are done if we can show that
\[\binom{n}{t}\Big/\binom{k}{t}>\binom{n}{2t-k}\Big/\binom{t}{2t-k}.\]

For this, it is enough to show that the following claims are true:
\begin{itemize}
\item[(i)] $\displaystyle{\binom{n}{2t-k}{\frac{t! (n-2t+k)!}{k! (n-t)!}}
= \binom{n}{t}\binom{t}{2t-k}\Big/\binom{k}{t}}$;
\item[(ii)] $(t! (n-2t+k)!)/(k! (n-t)!) > 1$.
\end{itemize}   
For (i), we have
\begin{eqnarray*}
\binom{n}{t}\binom{t}{2t-k}\Big/\binom{k}{t}
&=& \frac{n!}{(n-2t+k)! (2t-k)!} \frac{t! (n-2t+k)!}{k! (n-t)! }\\
&=& \binom{n}{2t-k} {\frac{t! (n-2t+k)!}{k! (n-t)!}}.
\end{eqnarray*}
For (ii),
\[\frac{t! (n-2t+k)!}{k! (n-t)!}= \frac{(n-2t+k)(n-2t+k-1)\cdots (n-2t+k-(k-t)+1)}{k(k-1)\cdots (k-(k-t)+1)}\]
And since $n-2t+k> k$ for $n\geq 2k$ we have that
\[\frac{t! (n-2t+k)!}{k! (n-t)!} > 1.\qedhere \]
\end{proof}

We also have to consider the possibility that the complementary graph
$\Phi_t(n,k)$ has clique number equal to chromatic number.

The existence of cliques of size
$\binom{n-t}{k-t}$ in $\Phi_t(n,k)$ shows that the size of a coclique in
this graph is at most $\binom{n}{t}/\binom{k}{t}$, a fact that is easily
proved directly; equality holds if and only if the coclique is a Steiner
system. So the graph has clique number equal to chromatic number if and
only if the set of $k$-subsets of an $n$-set can be partitioned into block
sets of Steiner systems $S(t,k,n)$. A collection of Steiner systems which
partitions the set of all $k$-subsets is a \emph{large set}. In view of this,
we further conjecture the following:

\begin{conj}
There is a function $L$ such that, if $n\ge L(k)$, then the Johnson scheme
$J(n,k)$ is non-synchronizing if and only if there exists a large set of
Steiner triple systems $S(t,k,n)$ for some $t$ with $0<t<k$.
\end{conj}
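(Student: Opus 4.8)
The \emph{if} direction requires no new work: as shown in the discussion preceding the statement, the graph $\Phi_t(n,k)$ has clique number equal to chromatic number precisely when the $k$-subsets can be partitioned into block sets of Steiner systems $S(t,k,n)$, that is, when such a large set exists. Since $\Phi_t(n,k)$ is a non-trivial graph in the scheme, the existence of a large set at once makes the scheme non-synchronizing.

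For the converse, the plan is to reduce to the two interval graphs $\Delta_t(n,k)$ and $\Phi_t(n,k)$. Suppose the scheme is non-synchronizing, witnessed by a non-trivial graph $\Gamma_I$ with clique number $\omega$ equal to chromatic number $\chi$. The inequalities $\omega\le\chi$ and $\chi\ge N/\alpha$, where $N=\binom{n}{k}$ and $\alpha$ is the coclique number, hold for every graph, while the association-scheme bound gives $\omega\cdot\alpha\le N$; together these force $\omega\cdot\alpha=N$, so $\Gamma_I$ is non-separating. The first task is therefore to determine, for $n$ large in terms of $k$, exactly which non-trivial graphs $\Gamma_I$ can satisfy $\omega\cdot\alpha=N$.

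Here I would invoke Delsarte's evaluation of the eigenvalues of the $A_i$ as Eberlein polynomials, together with the ratio (Hoffman) bound, whose equality case $\omega\cdot\alpha=N$ imposes a rigid condition on the extreme eigenvalues of $\Gamma_I$. For each $I$ one estimates these eigenvalues as functions of $n$ and shows that, once $n$ exceeds a threshold depending on $k$, the equality can hold only when $I$ is one of the intervals $\{k-t+1,\ldots,k\}$ or $\{1,\ldots,k-t\}$, so that $\Gamma_I$ is $\Delta_t(n,k)$ or $\Phi_t(n,k)$. I expect this eigenvalue estimation to be the main obstacle: it is precisely the analytic difficulty standing behind Conjecture~\ref{c1}, and it is only the case $k=4$ that the present paper settles explicitly.

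Granting the classification, the conclusion is immediate. Theorem~\ref{t:ekr_col} eliminates $\Delta_t(n,k)$, whose maximum cocliques are of EKR type and cannot partition the vertex set, so $\Delta_t$ can never have clique number equal to chromatic number. Hence $\Gamma_I=\Phi_t(n,k)$, and the equalities $\omega=\chi=N/\alpha$ force every colour class to be a maximum coclique; as noted before the statement, these are exactly the block sets of Steiner systems $S(t,k,n)$, so the colouring is a large set. This yields the conjecture, modulo the eigenvalue classification.
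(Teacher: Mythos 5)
You are attempting to prove a statement that the paper itself presents only as a conjecture: the paper contains no proof of it, and your proposal does not supply one either. The parts of your argument that do go through are exactly the reductions the paper already makes in the surrounding text: the ``if'' direction (a large set of $S(t,k,n)$ gives clique number equal to chromatic number in $\Phi_t(n,k)$) is the paragraph immediately preceding the conjecture; the chain $\omega\le\binom{n}{k}/\alpha\le\chi$, which converts $\omega=\chi$ into $\omega\alpha=\binom{n}{k}$, is the observation opening Section~\ref{s3}; and the elimination of $\Delta_t(n,k)$ correctly combines Wilson's theorem \cite{wilson} (for $n>(t+1)(k-t+1)$ every maximum coclique is of EKR type) with Theorem~\ref{t:ekr_col}. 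All of that is sound and matches the paper.

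The genuine gap is the step you defer as ``the eigenvalue classification'': showing that for $n$ large in terms of $k$, equality $\omega\alpha=\binom{n}{k}$ (or $\omega=\chi$) in $\Gamma_I$ forces $I$ to be an interval $\{k-t+1,\ldots,k\}$ or $\{1,\ldots,k-t\}$. This is not a routine estimate waiting to be carried out; it is precisely the open content of Conjectures~\ref{c1} and~\ref{c2}, which the paper settles only for $k=4$ (Section~\ref{kis4}), and there only for separation. Moreover, the paper's own $k=4$ argument shows why a clean ratio-bound/eigenvalue argument is unlikely to suffice in general: equality in Corollary~\ref{cor:ratiobound} yields only divisibility constraints with sporadic surviving values of $n$, and the non-interval cases $I=\{1,3\}$ and $I=\{1,4\}$ additionally require the Q-matrix orthogonality conditions of Corollary~\ref{Delsarte_cor}, the combinatorial bound of Corollary~\ref{nbound}, the Hilton--Milner theorem \cite{hm}, and finite computer checks --- none of which is an asymptotic-in-$k$ eigenvalue estimate. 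The example $n=9$, $k=4$ in Section~\ref{s3}, where the non-interval graph $\Gamma_{\{1,3\}}$ does have clique number equal to chromatic number, shows the classification genuinely fails below the threshold, so the existence of the threshold function is exactly what is unknown. In short, your proposal is a correct reduction of the synchronization conjecture to (a form of) the separation conjecture, consistent with the paper's framing, but the missing step is the paper's main open problem, so what you have is a conditional argument, not a proof.
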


Much less is known about the existence of large sets. The main results are
the following:
\begin{enumerate}
\item For $t=1$, an $S(t,k,n)$ is simply a partition of $\{1,\ldots,n\}$ into
sets of size $k$, which exists if and only if $k$ divides $n$. A theorem of
Baranyai~\cite{baranyai} shows that a large set of such partitions exists
whenever $k$ divides $n$.
\item For $t=2$, $k=3$, Kirkman~\cite{kirkman} showed that a $S(2,3,n)$
exists if and only if $n\equiv1$ or $3\pmod{6}$. The smallest example is
the \emph{Fano plane} with $n=7$. Cayley~\cite{cayley} showed that there does
not exist a large set of Fano planes; indeed, there do not exist more than two
pairwise disjoint Fano planes. However, Lu and Teirlinck~\cite{teirlinck}
showed that, for all ``admissible'' $n$ greater than $7$, a large set of
Steiner triple systems exists.
\item Koloto\u{g}lu and Magliveras~\cite{km} have constructued large sets of
projective planes of orders $3$ and $4$ (that is, $S(2,4,13)$ and $S(2,5,21)$).
\end{enumerate}

It may be that large sets of $S(t,k,n)$ exist whenever the divisibility 
conditions are satisfied and $n$ is sufficiently large. If so, then our
conjecture above would imply that, for $n$ sufficiently large in terms of
$k$, the Johnson scheme $J(n,k)$ is synchronizing if and only if it is
separating. However, we are not sufficiently confident to conjecture this!

\section{Small examples}
\label{s3}

An association scheme is non-separating if there is a graph in the scheme for
which the product of the clique and coclique numbers is equal to the number
of vertices. Given such a situation, there are two ways that synchronization
can fail: either there is a partition of the vertices into cocliques of maximal
size, so that the clique number and chromatic number are equal, or there is
a partition into cliques of maximal size, so that this equality holds in the
complementary graph.

As we saw in the preceding section, if $n>(t+1)(k-t+1)$, then $\Delta_t(n,k)$
cannot have clique number equal to chromatic number, since the maximum-size
cocliques are of EKR type; and complement $\Phi_t(n,k)$ has clique number equal
to chromatic number if and only if there is a large set of Steiner systems
$S(t,k,n)$.

For smaller $n$, non-synchronization can arise in one of two ways: either
because there are other large cocliques in $\Delta_t(n,k)$, or because one
of the other graphs $\Gamma_I(n,k)$ in the association scheme has clique
number equal to chromatic number.

Here are five examples. In all but one (the case $n=9$, $k=4$) it is the first
of these two ways which occurs.

\paragraph{The case $k=3$, $n=7$.} In this case, for $t=2$, a clique is the
block set of the $S(2,3,7)$, the Fano plane. Since it is a projective plane,
any two lines meet in a point, so it is a clique in the graph
$\Gamma_{\{2\}}(3,7)$.
As well as cocliques of EKR type (all $3$-sets containing a given two points),
there are cocliques of size $5$ defined as follows: let $L$ be a line of the
Fano plane, and take $L$ together with the four $3$-sets disjoint from it.
Furthermore, the seven such sets obtained by performing this construction for
each line of the Fano plane partition the $35$ sets of size $3$ into seven
cocliques of size $5$. So this graph has clique number equal to
chromatic number.

\paragraph{The case $k=3$, $n=8$.} Again a Fano plane gives us a $7$-clique
in the graph $\Gamma_{\{2\}}(3,8)$. Now the eight $3$-sets consisting of a
line $L$, three sets each comprising two points of $L$ and the point outside
the Fano plane, and the four sets consisting of three of the four points of
the Fano plane outside $L$, form a coclique; doing this for the seven lines we 
obtain a partition of the $56$ sets of size $3$ into seven cocliques of
size $8$, so this graph has clique number equal to chromatic number. Another
way of viewing this is to observe that the Fano plane has an extension to a
$S(3,4,8)$ whose blocks fall into $7$ parallel classes with two blocks in 
each; the eight $3$-sets contained in a block of a parallel class form
a coclique, and we obtain seven such sets, one for each parallel class.

\paragraph{The case $k=4$, $n=9$.} The Steiner system $S(3,4,8)$ has $14$
blocks, any two meeting in $0$ or $2$ points. We can construct a set of $9$
subsets of size $4$, any two meeting in $1$ or $3$ points, as follows:
partition $\{1,\ldots,9\}$ into three sets of size $3$, arranged around a
circle; now take the $4$-subsets consisting of one part and a single point
of the next (in the cyclic order).

Breach and Street~\cite{bs} showed that the $126$ $4$-subsets of a $9$-set
can be partitioned into a so-called \emph{overlarge set} of nine Steiner
systems $S(3,4,8)$ (each omitting a point); indeed, this can be done in just
two non-isomorphic ways, each admitting a $2$-transitive group. This gives a
colouring of the graph $\Gamma_{\{1,3\}}(4,9)$ corresponding to
intersections $1$ and $3$. (Their proof was computational; a more geometric
proof involving triality was given by Cameron and Praeger~\cite{cp}.)

\paragraph{The case $k=5$, $n=11$.} An example similar to the first is obtained
using the Steiner system $S(4,5,11)$, any two of whose blocks intersect in $1$,
$2$ or $3$ points. Thus the blocks form a $66$-clique in the graph
$\Gamma_{\{2,3,4\}}(11,5)$. A block together with the six $5$-sets disjoint
from it form a coclique of size $7$,
and the $66$ sets obtained in this way form a colouring of the graph.

\paragraph{The case $k=5$, $n=12$.} Again the blocks of $S(4,5,11)$ form a
$66$-clique in $\Gamma_{\{2,3,4\}}(12,5)$. The Steiner system has an extension
to a $S(5,6,12)$ whose blocks come in $66$ parallel classes with two disjoint
blocks in each, and the twelve $5$-sets contained in a block of a fixed
parallel class form a coclique. 

\section{The case $I=\{1,k\}$}

In this section, we deal with the case $I=\{1,k\}$ (or the complement
$I=\{2,\ldots,k-1\}$) of Conjecture~\ref{c1}, and show that these cannot
occur if $n$ is sufficiently large. In other words, taking account of the
indexing used in the Johnson scheme, we show the following.

\begin{theorem}
There is a function $f$ such that, if $n\ge f(k)$, and $S$ and $T$ are families
of $k$-subsets of $\{1,\ldots,n\}$ with the property that $S$ is
$\{0,k-1\}$-intersecting (that is, any two of its members intersect in $0$ or
$k-1$ points) and $T$ is $\{1,\ldots,k-2\}$-intersecting, then
$|S|\cdot|T|<{n\choose k}$.
\label{specialcase}
\end{theorem}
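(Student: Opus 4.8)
The plan is to read $S$ as a clique and $T$ as a coclique in the graph $\Gamma_{\{1,k\}}(n,k)$, so that the general association-scheme inequality quoted above already gives $|S|\cdot|T|\le\binom nk$; the whole task is to make this strict once $n$ is large. I would achieve this by proving two separate bounds, $|S|=O(n)$ and $|T|=O(n^{k-2})$, whence $|S|\cdot|T|=O(n^{k-1})$, which is eventually far smaller than $\binom nk=\Theta(n^k)$. The function $f(k)$ is then just the crossover point of these two polynomials. Throughout I assume $k\ge3$ (for $k\le2$ the family $T$ is empty and the statement is vacuous, and small $k$ is in any case covered by the earlier theorems).

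To bound $|S|$, put on $S$ the graph in which two members are joined when they meet in $k-1$ points, i.e.\ differ in a single element. If $A_0,A_1,A_2\in S$ with $A_0$ joined to $A_1$ and $A_1$ to $A_2$, then $A_0\cap A_1$ and $A_2\cap A_1$ are $(k-1)$-subsets of the $k$-set $A_1$, so $|A_0\cap A_2|\ge(k-1)+(k-1)-k=k-2\ge1$; since $S$ is $\{0,k-1\}$-intersecting this forces $|A_0\cap A_2|=k-1$, so $A_0$ and $A_2$ are also joined. Hence each connected component is a clique in the Johnson graph $J(n,k)$, and by the standard classification every such clique lies either in a \emph{star} (all members containing a fixed $(k-1)$-set) or in the family of all $k$-subsets of a fixed $(k+1)$-set. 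Writing $U_K=\bigcup_{A\in K}A$ for a component $K$, one checks $|K|\le|U_K|$ in both cases (a star component of size $c$ has $|U_K|=k-1+c$, a $(k+1)$-set component has $|U_K|\le k+1$). Finally, members of distinct components meet in $0$ points, so the sets $U_K$ are pairwise disjoint, and summing gives $|S|=\sum_K|K|\le\sum_K|U_K|\le n$.

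The harder half is bounding $|T|$, and this is where I expect the main work to lie. Every pairwise intersection in $T$ is at least $1$, so $T$ is an intersecting family, and none is $k-1$, so no two members differ in a single element. I would split into two cases. If the members of $T$ share a common element $x$, then deleting $x$ gives a family of $(k-1)$-subsets of $\{1,\ldots,n\}\setminus\{x\}$, no two meeting in $k-2$ points; hence each of their $(k-2)$-subsets lies in at most one member, and counting these yields $(k-1)|T|\le\binom{n-1}{k-2}$, so $|T|=O(n^{k-2})$. If $T$ has no common element, then (it cannot be a full star, since a star contains pairs meeting in $k-1$ points, which $T$ forbids) the Hilton--Milner theorem applies and gives $|T|\le\binom{n-1}{k-1}-\binom{n-k-1}{k-1}+1$; the two leading $\Theta(n^{k-1})$ terms cancel, so this too is $O(n^{k-2})$. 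The obstacle here is less any single estimate than verifying that the dichotomy is exhaustive and that Hilton--Milner is invoked in its valid range ($n\ge2k$ suffices, well within $n\ge f(k)$); one should also check, via the construction that fixes a point and takes a largest $(k-1)$-set family avoiding $(k-2)$-intersections, that the order $n^{k-2}$ is genuinely attained, which confirms that neither bound can be improved enough to shortcut the argument.

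Combining the two estimates gives $|S|\cdot|T|=O(n)\cdot O(n^{k-2})=O(n^{k-1})$, whereas $\binom nk\ge(n-k+1)^k/k!=\Theta(n^k)$. Choosing $f(k)$ beyond the point where the degree-$(k-1)$ bound on the product drops below the degree-$k$ lower bound on $\binom nk$ then yields $|S|\cdot|T|<\binom nk$ for all $n\ge f(k)$, as required.
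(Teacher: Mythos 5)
Your proof is correct and takes essentially the same route as the paper's: the same three steps --- $|S|\le n$ via the transitivity of the ``meet in $k-1$ points'' relation and the star/$(k+1)$-set classification of $(k-1)$-intersecting families, the partial Steiner system bound $(k-1)|T|\le\binom{n-1}{k-2}$ when $T$ has a common point, and Hilton--Milner when it does not --- followed by the same comparison of a degree-$(k-1)$ polynomial bound against the degree-$k$ growth of $\binom{n}{k}$. Your component-by-component support count and the explicit double count of $(k-2)$-subsets merely fill in details the paper leaves implicit.
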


\begin{proof}
The proof proceeds in three steps.

\paragraph{Step 1} $|S|\le n$.

To see this, consider first a $(k-1)$-intersecting family $U$ of $k$-sets.
It is easy to see that there are just two possibilities:
\begin{enumerate}
\item all members of $U$ contain a fixed $(k-1)$-set;
\item all members of $U$ are contained in a fixed $(k+1)$-set.
\end{enumerate}

Next we claim that the relation $\sim$ on $S$ defined by $A\sim B$ if $A=B$ or
$|A\cap B|=k-1$ is an equivalence relation. It is clearly reflexive and
symmetric, so suppose that $A\sim B$ and $B\sim C$. Then
$|A\cup B|=|B\cup C|=k+1$, and so $|A\cap C|\ge k-2$, whence $|A\cap C|=k-1$ as
required.

Now if two members of $S$ belong to distinct equivalence classes, they are
disjoint. So the support of $S$ (the set of points lying in some element of
$S$) is the union of the supports of the equivalence classes, which are
pairwise disjoint. We have seen that the number of sets in each equivalence
class does not exceed the cardinality of its support; so the same holds for
$S$, and the claimed inequality follows.

For the next step, we note that $T$ is an intersecting family. We split the
proof into two subcases.

\paragraph{Step 2} If the intersection of the sets in $T$ is non-empty, then
$|T|\le{n-1\choose k-2}/(k-1)$.

For let $x$ be the unique point in the intersection. Then
\[T=\{\{x\}\cup B:B\in T'\},\]
where $T'$ is a $\{0,\ldots,k-3\}$-intersecting family of $(k-1)$-subsets of
$\{1,\ldots,n\}\setminus\{x\}$; in other words, a partial $S(k-2,k-1,n-1)$.
So $|T|=|T'|\le{n-1\choose k-2}/(k-1)$, the right-hand side being the number
of blocks in a hypothetical Steiner system with these parameters.

\paragraph{Step 3} If the intersection of the sets in $T$ is empty, then
$|T|\le{n-1\choose k-1}-{n-k-1\choose k-1}+1$.

Since $T$ is an intersecting family, this is just the conclusion of the
Hilton--Milner theorem~\cite{hm}.

\paragraph{Conclusion of the proof} We have
\[{n\choose k}=|S|\cdot|T|\le
\begin{cases}
n{n-1\choose k-2}/(k-1) & \mbox{ if }\bigcap T\ne\varnothing,\\
n({n-1\choose k-1}-{n-k-1\choose k-1}+1) & \mbox{ if }\bigcap T=\varnothing.
\end{cases}\]
But in each case, for fixed $k$, the left-hand side of the inequality is
a polynomial of degree $k$ in $n$, whereas the right-hand side is a polynomial
of degree $k-1$; thus the inequality holds for only finitely many values of $n$.
\end{proof}

We remark that, in fact, we know of no examples meeting the bound for this
case with $n>2k$. So as well as extending these techniques to other cases,
the problem of deciding whether the bound is always strict remains.

In the next section we use the fact proved in the first part of the above proof:

\begin{cor}
For $k\ge3$ and $n>2k$, a $\{0,k-1\}$-intersecting family of $k$-subsets of
$\{1,\ldots,n\}$ has size at most $n$.
\label{nbound}
\end{cor}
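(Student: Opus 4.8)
The plan is to extract and reassemble the argument already carried out in Step~1 of the proof of Theorem~\ref{specialcase}, which establishes exactly this bound; the only work is to isolate it from the ambient hypotheses of the theorem and present it as a self-contained statement about a single $\{0,k-1\}$-intersecting family $S$.

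First I would record the structural dichotomy for families in which every pair of members meets in \emph{exactly} $k-1$ points: such a family is either a \emph{star}, with all members containing a common $(k-1)$-set, or it is \emph{small}, with all members contained in a common $(k+1)$-set. I would prove this by fixing two members $A=X\cup\{a\}$ and $B=X\cup\{b\}$, where $X=A\cap B$ has size $k-1$, and checking that any further member $C$ either contains $X$ (whence $C=X\cup\{c\}$, feeding the star case) or has the form $(X\setminus\{x\})\cup\{a,b\}$ for some $x\in X$ (feeding the small case). A short incompatibility check then shows these two types cannot coexist: a star-type set $X\cup\{c\}$ and a small-type set $(X\setminus\{x\})\cup\{a,b\}$ meet in $X\setminus\{x\}$, of size $k-2\ne k-1$. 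Hence one alternative must hold globally.

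Next I would define the relation $A\sim B\iff A=B \text{ or } |A\cap B|=k-1$ on $S$ and verify it is an equivalence relation. Reflexivity and symmetry are immediate; for transitivity, if $A\sim B\sim C$ with the three sets distinct then $A$ and $C$ each differ from $B$ in a single point, so $|A\cap C|\ge k-2$, and since $S$ is $\{0,k-1\}$-intersecting the value $|A\cap C|$ must be $0$ or $k-1$. Here the hypothesis $k\ge3$ is essential: it makes $k-2\ge1>0$, which rules out $|A\cap C|=0$ and forces $A\sim C$ (for $k=2$ transitivity genuinely fails). Distinct classes then consist of mutually disjoint $k$-sets, so the supports of the classes are pairwise disjoint subsets of $\{1,\dots,n\}$.

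Finally I would count within one class, which is a family meeting pairwise in exactly $k-1$ points and so is governed by the dichotomy. In the star case the members arise by adjoining distinct points to the common $(k-1)$-set, so their number is the support size minus $k-1$; in the small case the members are $k$-subsets of a fixed $(k+1)$-set, hence number at most $k+1$, which equals the support size once the class has at least two members. Either way the number of members of a class is at most the size of its support, and summing over the disjoint supports yields $|S|\le n$. I anticipate no real obstacle: the only points demanding care are the transitivity step, where $k\ge3$ enters, and the per-class count in the small case; the hypothesis $n>2k$ serves only to situate $S$ inside an $n$-element set.
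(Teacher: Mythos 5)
Your proof is correct and is essentially the paper's own argument: the paper obtains Corollary~\ref{nbound} directly from Step~1 of the proof of Theorem~\ref{specialcase}, using exactly the same star/$(k+1)$-set dichotomy for $(k-1)$-intersecting families, the same equivalence relation $\sim$ (with transitivity hinging on $k\ge3$), and the same count of class sizes against pairwise disjoint supports. Your additions (the explicit verification of the dichotomy and the observation that $n>2k$ plays no role in the argument) simply fill in details the paper leaves as ``easy to see.''
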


Indeed, it is not too hard to find the precise upper bound; but we do not
require this.

\section{The case $k=4$}
\label{kis4}

\subsection{Background}

To handle the case $k=4$, we use the results of Delsarte~\cite{delsarte} on
association schemes. We begin with a brief introduction to this material, but
we also refer to  \cite[Chapter 6]{gm} where some of the computations 
that we omit are done in detail.

Suppose that the matrices $A_0,\ldots,A_{k-1}$ (with $A_0=I$) span the
Bose--Mesner algebra of an association scheme on $v$ points. Since this algebra
is a commutative algebra of real symmetric matrices, the matrices are simultaneously
diagonalisable: that is, there are idempotent matrices $E_0,\ldots,E_{k-1}$
spanning the same algebra, with $E_0=J/v$, where $J$ is the all-$1$ matrix.
Thus, for some coefficients $P_k(i)$ and $Q_k(i)$, we have
\begin{eqnarray*}
A_j &=& \sum_{i=0}^{k-1}P_j(i)E_i,\\
E_j &=& v^{-1}\sum_{i=0}^{k-1}Q_j(i)A_i.
\end{eqnarray*}
Here the numbers $P_j(i)$ for $i=0,\ldots,k-1$ are the eigenvalues of $A_j$.
The matrices with $(i,j)$ entry $P_j(i)$ and $Q_j(i)$ are called the
\emph{matrix of eigenvalues} and \emph{dual matrix of eigenvalues} of the
scheme.

Delsarte used these matrices to provide bounds on the clique and coclique
numbers of graphs in an association scheme:

\begin{theorem}[\cite{delsarte}, Theorem 5.9; see also \cite{gm}]
Let $\mathcal{A}$ be an association scheme on $v$ vertices and let $\Gamma$
 be the union of some of the graphs in the scheme. If $C$ is a clique and $S$ is an coclique in $\Gamma$,
 then $|C|\cdot|S|\le v$. If equality holds and $x$ and $y$ are the respective characteristic vectors of $C$ and $S$, then
 $(xE_jx^\top)(yE_jy^\top)=0$ for all $j>0$.
\end{theorem}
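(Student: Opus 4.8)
The plan is to pass from the combinatorial statement to a computation inside the Bose--Mesner algebra, equipped with the normalised trace inner product $\langle M,N\rangle=v^{-1}\mathrm{tr}(MN^\top)$. First I would attach to the clique $C$ and the coclique $S$ their rank-one $0$--$1$ matrices $X=x^\top x$ and $Y=y^\top y$, whose entries are $1$ exactly on $C\times C$ and $S\times S$ respectively. The two orthogonality facts I would record are that the $A_i$ are pairwise orthogonal with $\langle A_i,A_i\rangle=k_i$ (the valency of $A_i$), while the idempotents satisfy $\langle E_i,E_j\rangle=v^{-1}m_i\,\delta_{ij}$, where $m_i=\mathrm{rank}(E_i)$; both follow from $A_i\circ A_j=\delta_{ij}A_i$ and $E_iE_j=\delta_{ij}E_i$ together with $\mathrm{tr}(A_i)=v\delta_{i0}$ and $\mathrm{tr}(E_i)=m_i$.

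Expanding $X$ in the two bases then gives $X=v^{-1}\sum_i k_i^{-1}\alpha_iA_i=\sum_j m_j^{-1}a_jE_j$, where $\alpha_i=xA_ix^\top$ and $a_j=xE_jx^\top$, and similarly $Y$ with $\beta_i=yA_iy^\top$ and $b_j=yE_jy^\top$. The heart of the argument is to evaluate $\langle X,Y\rangle$ in both expansions. Using the $A$-basis, I would observe that for every $i\ge1$ at least one of $\alpha_i,\beta_i$ vanishes: since $C$ is a clique in $\Gamma=\Gamma_I$ we have $\alpha_i=0$ for $i\notin I\cup\{0\}$, and since $S$ is a coclique we have $\beta_i=0$ for $i\in I$. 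Hence only the $i=0$ term survives, and (using $k_0=1$, $\alpha_0=|C|$, $\beta_0=|S|$) this computation collapses to $\langle X,Y\rangle=v^{-2}|C|\cdot|S|$. Using the $E$-basis instead yields $\langle X,Y\rangle=v^{-1}\sum_j m_j^{-1}a_jb_j$.

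Equating the two evaluations gives the identity $\frac{|C|\cdot|S|}{v}=\sum_{j}\frac{(xE_jx^\top)(yE_jy^\top)}{m_j}$, and the theorem falls out by positivity. Each $E_j$ is positive semidefinite, so every summand is nonnegative; discarding all but the $j=0$ term (for which $a_0=|C|^2/v$, $b_0=|S|^2/v$ and $m_0=1$) leaves $\frac{|C|\cdot|S|}{v}\ge\frac{|C|^2|S|^2}{v^2}$, which rearranges to $|C|\cdot|S|\le v$. Moreover equality forces $\sum_{j>0}m_j^{-1}a_jb_j=0$, and since each term is nonnegative this is exactly the assertion $(xE_jx^\top)(yE_jy^\top)=0$ for all $j>0$. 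I expect the only real obstacle to be bookkeeping rather than anything conceptual: one must get the normalisations $\langle A_i,A_i\rangle=k_i$ and $\langle E_j,E_j\rangle=v^{-1}m_j$ exactly right, since a stray factor of $v$ would spoil the clean bound, and one must justify carefully that the clique and coclique conditions kill precisely the $i\ge1$ terms in the $A$-basis evaluation.
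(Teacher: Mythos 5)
The paper itself offers no proof of this statement---it is quoted from Delsarte (Theorem 5.9) and Godsil--Meagher---so your attempt can only be judged against the standard argument, which it follows in outline and in all its normalisations: $\langle A_i,A_j\rangle=\delta_{ij}k_i$ and $\langle E_i,E_j\rangle=\delta_{ij}m_i/v$ are right, as are $m_0=1$, $a_0=|C|^2/v$, $b_0=|S|^2/v$, and the key support-disjointness $\alpha_i\beta_i=0$ for all $i\ge1$ coming from the clique and coclique conditions. But one step, as written, is false: the rank-one matrix $X=x^\top x$ does \emph{not} lie in the Bose--Mesner algebra (members of the algebra are constant on each relation class, which $X$ is not unless $C$ is trivial), so your displayed ``expansion'' $X=v^{-1}\sum_i k_i^{-1}\alpha_iA_i=\sum_j m_j^{-1}a_jE_j$ is not an equality of matrices. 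Indeed, if it were, your two evaluations of $\langle X,Y\rangle$ would have to agree with the direct one, $\langle X,Y\rangle=v^{-1}\mathrm{tr}(x^\top x\,y^\top y)=v^{-1}|C\cap S|^2$, and this contradicts your $A$-basis value $v^{-2}|C|\,|S|$ already when $C$ and $S$ are disjoint.

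The repair is one line, because the coefficients you computed are exactly the coefficients of the \emph{orthogonal projections} of $X$ and $Y$ onto the algebra: writing $\hat X$ for the projection of $X$ with respect to $\langle M,N\rangle=v^{-1}\mathrm{tr}(MN^\top)$, one has $\hat X=v^{-1}\sum_i k_i^{-1}\alpha_iA_i=\sum_j m_j^{-1}a_jE_j$ (your computations of $\langle X,A_i\rangle=v^{-1}\alpha_i$ and $\langle X,E_j\rangle=v^{-1}a_j$ are precisely what is needed), and similarly for $\hat Y$. Evaluating $\langle\hat X,\hat Y\rangle$---equivalently $\langle X,\hat Y\rangle$, since projection is self-adjoint---in the two orthogonal bases gives exactly your identity
\[
\frac{|C|\cdot|S|}{v}=\sum_{j}\frac{(xE_jx^\top)(yE_jy^\top)}{m_j},
\]
and from there your positivity argument (each $E_j$ is positive semidefinite, discard $j>0$, then analyse equality) is correct and complete. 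With this substitution your proof coincides with the standard Delsarte/Godsil--Meagher argument that the paper cites.
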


In the above notation, the inner distribution $a$ of $C$ is the vector where $a_i=x A_i x^\top / |C|$
for each $i\in\{0,\ldots, d\}$ (and $\mathcal{A}$ has $d$ classes).
Now if $Q$ is the dual matrix of eigenvalues of $\mathcal{A}$, then
\[
(a Q)_j = \frac{v}{|C|} x E_j x^\top
\]
for all $j\ge 0$. This vector is sometimes known as the \emph{MacWilliams transform} of $C$.

The \emph{degree set} of a subset $X$ of the vertices of $\Gamma$ is the set of nonzero indices $i$
for which the $i$-th coordinate of its inner distribution is nonzero.
The \emph{dual degree set} of $X$ is the set of nonzero indices $j$
for which the $j$-th coordinate of its MacWilliams transform is nonzero.
Two subsets $X$ and $Y$ of the vertices of $\Gamma$ are \emph{design-orthogonal} if
their dual degree sets are disjoint. Similarly, $X$ and $Y$ are \emph{code-orthogonal} if
their degree sets are disjoint. 

\begin{cor}\label{Delsarte_cor}
Suppose a clique $C$ and coclique $S$ meet the bound; $|C|\cdot|S|=v$.
 Then the Schur product of the MacWilliams transforms of $C$ and $S$ equals
$(v,0,\ldots,0)$.
\end{cor}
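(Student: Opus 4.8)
The plan is to evaluate the Schur (entrywise) product of the two MacWilliams transforms one coordinate at a time, and to observe that the equality case of the preceding Delsarte theorem annihilates every coordinate except the zeroth. Write $x$ and $y$ for the characteristic vectors of the clique $C$ and coclique $S$, let $a$ and $b$ be their respective inner distributions, and recall that the MacWilliams transforms in question are $aQ$ and $bQ$. Using the identity $(aQ)_j=(v/|C|)\,xE_jx^\top$ displayed above, together with its analogue $(bQ)_j=(v/|S|)\,yE_jy^\top$ for the coclique, the $j$-th coordinate of the Schur product is
\[
(aQ)_j\,(bQ)_j=\frac{v^2}{|C|\cdot|S|}\,(xE_jx^\top)(yE_jy^\top).
\]

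For the coordinates with $j>0$, I would simply invoke the hypothesis $|C|\cdot|S|=v$ and quote the equality statement of the Delsarte theorem, which asserts exactly that $(xE_jx^\top)(yE_jy^\top)=0$ whenever $j>0$. Hence each of these coordinates of the Schur product vanishes; this is the only substantive input, and it is supplied directly by the quoted theorem.

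It remains to treat the coordinate $j=0$, the one place where a short explicit computation is needed. Here I would use $E_0=J/v$, so that $xE_0x^\top=xJx^\top/v=|C|^2/v$, since $x$ is a zero-one vector with exactly $|C|$ ones; thus $(aQ)_0=(v/|C|)(|C|^2/v)=|C|$, and symmetrically $(bQ)_0=|S|$. The zeroth coordinate of the Schur product is therefore $|C|\cdot|S|=v$, and combining the two cases yields $(v,0,\ldots,0)$ as claimed. The only real obstacle is bookkeeping in this last step: one must verify that the normalising factors $v/|C|$ and $v/|S|$ built into the MacWilliams transform combine with the all-ones projector $E_0$ to give precisely $|C|$ and $|S|$, so that their product is $v$ rather than some other scalar.
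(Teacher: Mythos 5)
Your proof is correct and takes exactly the route the paper intends: the corollary is stated there without explicit proof, as an immediate consequence of the displayed identity $(aQ)_j=\frac{v}{|C|}\,xE_jx^\top$ together with the equality clause $(xE_jx^\top)(yE_jy^\top)=0$ for $j>0$ of the preceding Delsarte theorem. Your $j=0$ computation via $E_0=J/v$, yielding $(aQ)_0=|C|$ and $(bQ)_0=|S|$ and hence the zeroth coordinate $|C|\cdot|S|=v$, is precisely the bookkeeping the paper leaves to the reader.
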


\begin{cor}\label{cor:ratiobound}
Suppose $\Gamma$ is a graph from an association scheme $\mathcal{A}$ on $v$
vertices, and suppose $\alpha(\Gamma)\omega(\Gamma)=v$. Then
$\omega(\Gamma)=1-\deg(\Gamma)/\tau$,
where $\tau$ is the smallest eigenvalue of $\Gamma$,
and in particular, $\tau$ divides $\deg(\Gamma)$.
\end{cor}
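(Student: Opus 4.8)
The plan is to combine the Hoffman--Delsarte ratio bound with the equality condition recorded in Corollary~\ref{Delsarte_cor}. Write $d=\deg(\Gamma)$, let $\Gamma=\Gamma_I$ have adjacency matrix $A=\sum_{i\in I}A_i$, and recall $A=\sum_j P_I(j)E_j$ with $P_I(j)=\sum_{i\in I}P_i(j)$, so that the eigenvalue of $A$ on the eigenspace $\mathrm{col}(E_j)$ is $P_I(j)$, with $P_I(0)=d$ and $\tau=\min_{j\ge1}P_I(j)$. First I would derive the ratio bound for a coclique $S$ with characteristic vector $x$. Since $S$ is a coclique, $x^\top Ax=0$; expanding in the idempotents and using $\|E_0x\|^2=|S|^2/v$, $\sum_j\|E_jx\|^2=|S|$, gives $0=d\,|S|^2/v+\sum_{j\ge1}P_I(j)\|E_jx\|^2\ge d\,|S|^2/v+\tau(|S|-|S|^2/v)$, whence $|S|\le -v\tau/(d-\tau)$, with equality precisely when $\|E_jx\|^2=0$ for every $j\ge1$ with $P_I(j)>\tau$ (that is, when the dual degree set of $S$ lies inside the set of indices on which $\Gamma$ attains $\tau$). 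Substituting $|S|=\alpha=v/\omega$ yields $\omega\ge 1-d/\tau$.

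For the reverse inequality I would use that the extremal clique $C$ (with $|C|=\omega$) and coclique $S$ (with $|S|=\alpha$) satisfy $|C|\cdot|S|=v$, so Corollary~\ref{Delsarte_cor} applies: their MacWilliams transforms are design-orthogonal, i.e.\ $\|E_jx\|^2\,\|E_jy\|^2=0$ for all $j\ge1$, where $y$ is the characteristic vector of $C$. A clean first dividend is that the number of edges between $C$ and $S$ is forced: expanding $y^\top Ax=\sum_j P_I(j)\langle E_jx,E_jy\rangle$, using design-orthogonality (which kills every term with $j\ge1$) together with $\langle E_0x,E_0y\rangle=|C|\,|S|/v=1$, gives $y^\top Ax=d$. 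The decisive step is to promote design-orthogonality to the \emph{equality condition} of the ratio bound, namely to show that the dual degree set of $S$ is contained in $\{\,j:P_I(j)=\tau\,\}$; this forces $|S|=-v\tau/(d-\tau)$ and therefore $\omega=v/|S|=1-d/\tau$.

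The main obstacle is exactly this last step. One cannot simply multiply the coclique ratio bound by a ``clique ratio bound'' $\omega\le 1-d/\tau$, because that clique inequality is false for general graphs in a scheme: in the complement of a $7$-cycle (a graph in a $3$-class scheme on $\mathbb{Z}_7$) the clique number is $3$ while $1-d/\tau<3$, and there no clique and coclique meet the product bound. Thus the hypothesis $\alpha\omega=v$, and with it the design-orthogonality of Corollary~\ref{Delsarte_cor}, is essential. I would establish the step by showing that design-orthogonality, combined with the facts that $C$ is a clique in $\Gamma_I$ (so its degree set lies in $I$) and $S$ is a coclique (so its degree set avoids $I$), forces the nonzero projections $E_jx$ with $j\ge1$ to vanish outside the eigenspaces on which $\Gamma$ attains its smallest eigenvalue $\tau$; this is precisely the equality analysis of the Delsarte linear-programming bound, carried out in detail in \cite[Chapter~6]{gm}. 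Finally, once $\omega=1-d/\tau$ is known, integrality of $\omega$ gives that $d/\tau=1-\omega$ is an integer, i.e.\ $\tau$ divides $\deg(\Gamma)$.
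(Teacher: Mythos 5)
You correctly located the crux that the paper glosses over: your first paragraph is, in substance, the paper's entire proof (Lov\'asz's ratio bound $\alpha(\Gamma)\le v/(1-\deg(\Gamma)/\tau)$ plus $\alpha\omega=v$ gives $\omega\ge 1-\deg(\Gamma)/\tau$), and the real content is the reverse inequality $\omega\le 1-\deg(\Gamma)/\tau$; your $\overline{C_7}$ example showing the naive clique ratio bound fails for unions of classes is valid. But your self-declared ``decisive step'' --- that the design-orthogonality of Corollary~\ref{Delsarte_cor} forces the coclique's dual degree set into $\{j: P_I(j)=\tau\}$ --- is never argued, only deferred to \cite{gm}, and in the generality you set up ($\Gamma=\Gamma_I$ an arbitrary union of classes) it is \emph{false}, with a counterexample inside this very paper. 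Take $J(9,4)$ with $I=\{1,3\}$ (Section~\ref{s3}, the case $k=4$, $n=9$): the nine $4$-sets from the cyclic $3{+}3{+}3$ construction form a clique, the $14$ blocks of an $S(3,4,8)$ form a coclique, and $9\cdot 14=126=\binom{9}{4}$, so the hypothesis $\alpha\omega=v$ holds. The spectrum of $\Gamma_{\{1,3\}}$ here is $60,-3,4,4,-8$, so $1-\deg/\tau=1+60/8=8.5\ne 9=\omega$. Computing the projections, the coclique's dual degree set is $\{1,4\}$ (it has positive mass on the eigenvalue $P_I(1)=-3>\tau=-8$, consistent with $14<126/8.5$, i.e.\ the ratio bound is \emph{not} attained), while the clique's dual degree set is $\{2,3\}$: design-orthogonality holds exactly as Delsarte's theorem demands, yet your claimed conclusion fails. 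No equality analysis of the LP bound can close this gap for unions of classes.

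What makes the corollary true --- and how the paper actually uses it --- is that $\Gamma$ is taken to be a \emph{single} class ($\Gamma_{\{2\}}$ and $\Gamma_{\{3\}}$ in the main theorem; note that $I=\{1,3\}$ and $I=\{1,4\}$ are handled there by a separate $Q$-matrix argument, essentially because of the phenomenon above). A single class of a scheme is $1$-walk-regular, and for $1$-walk-regular graphs the clique ratio bound $\omega(\Gamma)\le 1-\deg(\Gamma)/\tau$ is a theorem (it is the clique version of the ratio bound in Chapter~3 of \cite{gm}, not the Chapter~6 material you cite); combined with the inequality from your first paragraph this yields equality, and then integrality of $\omega$ gives $\tau\mid\deg(\Gamma)$, exactly as you conclude. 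The paper's own proof is equally terse on the clique side (it derives only $\omega\ge 1-\deg/\tau$ and asserts equality), so your diagnosis of where the work lies was sound; but the repair you propose routes through a false intermediate claim, and any correct repair requires restricting $\Gamma$ to a single class (or some substitute for $1$-walk-regularity) rather than strengthening the design-orthogonality analysis.
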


\begin{proof}
Let $\tau$ be the smallest eigenvalue of $\Gamma$.
By the result of Lov\'asz~\cite{lovasz},
$\alpha(\Gamma) \le \frac{v}{1-\deg(\Gamma)/\tau}$.
So since $\alpha(\Gamma)\omega(\Gamma)=v$, we have  
$v\le \omega(\Gamma) v / (1-\deg(\Gamma)/\tau)$ and hence $
1-\deg(\Gamma)/\tau \le \omega(\Gamma)$;  that is, we obtain equality 
$\omega(\Gamma)=1-\deg(\Gamma)/\tau$.
\end{proof}

We can find a simple expression for the eigenvalues of the Johnson scheme from
Section 4.2.1 of Delsarte's PhD thesis. The analogue of the Krawchouk
polynomials of the Hamming scheme to the Johnson scheme are the \emph{Eberlein
polynomials}, or \emph{dual Hahn polynomials}.
Given an integer $0\le j\le k$, we define the Eberlein polynomial $E_j(x)$ in the indeterminate $x$, as follows:
\[
E_j(x):=\sum_{t=0}^j (-1)^{j-t}\binom{k-t}{j-t}\binom{k-x}{t}\binom{n-k+t-x}{t}.
\]
The $(i,j)$-entry of the matrix of eigenvalues of $P$ will be denoted $P_j(i)$.
Now Theorem 4.6 of Delsarte~\cite{delsarte} asserts:

\begin{theorem}
The matrix $P$ of eigenvalues and the dual matrix of eigenvalues $Q$ of the Johnson scheme
$J(n,k)$ are given by
\[
P_j(i)=E_j(i), \quad Q_i(j)=\frac{\binom{n}{i} -\binom{n}{i-1} }{\binom{k}{j}\binom{n-k}{j}} E_j(i),
\]
for $i,j=0,1,\ldots,k$.
\end{theorem}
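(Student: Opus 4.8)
The theorem has two parts — the eigenvalue matrix $P$ and the dual eigenvalue matrix $Q$ — and the second follows cheaply from the first together with the general duality of association schemes, so the plan is to concentrate on proving $P_j(i)=E_j(i)$. The starting point is that every $A_j$ commutes with the natural action of $S_n$ on $\binom{[n]}{k}$, so the common eigenspaces of the Bose--Mesner algebra are $S_n$-submodules. For $n\ge 2k$ the permutation module $\mathbb{R}^{\binom{[n]}{k}}$ decomposes multiplicity-freely as $\bigoplus_{i=0}^{k}S^{(n-i,i)}$, a sum of Specht modules for the two-row partitions; this yields exactly $k+1$ simultaneous eigenspaces $V_0,\ldots,V_k$, where $V_i$ is the image of the primitive idempotent $E_i$ and has dimension $m_i=\binom{n}{i}-\binom{n}{i-1}$. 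Because the decomposition is multiplicity-free, each $A_j$ acts as a single scalar $P_j(i)$ on $V_i$, and the task reduces to evaluating these scalars.

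To compute them I would exploit the metric ($P$-polynomial) structure. The matrix $A_1$ is the adjacency matrix of the Johnson graph, which is distance-regular of diameter $k$ with $A_j$ as its distance-$j$ matrix; a routine double count gives its intersection numbers $c_j=j^2$ and $b_j=(k-j)(n-k-j)$ (with valency $b_0=k(n-k)$ and $a_j=k(n-k)-b_j-c_j$). The eigenvalues of $A_1$ are then the zeros $\theta_i=(k-i)(n-k-i)-i$ of the tridiagonal intersection matrix, and since the scheme is $P$-polynomial we have $A_j=p_j(A_1)$ for polynomials $p_j$ of degree $j$, whence $P_j(i)=p_j(\theta_i)$. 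The $p_j$ are determined by the three-term recurrence $x\,p_j(x)=b_{j-1}p_{j-1}(x)+a_j p_j(x)+c_{j+1}p_{j+1}(x)$ with $p_0=1$ and $p_1(x)=x$.

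It remains to identify $p_j(\theta_i)$ with the explicit Eberlein polynomial, and this is the step I expect to be the main obstacle. Concretely I would verify that the given formula for $E_j$ satisfies the same data, i.e.\ that $E_0\equiv 1$, $E_1(i)=\theta_i$ (a short computation confirms $E_1(i)=(k-i)(n-k-i)-i$), and
\[
\theta_i\,E_j(i)=b_{j-1}E_{j-1}(i)+a_jE_j(i)+c_{j+1}E_{j+1}(i)
\]
for all $i$. Since both sides are polynomials in $i$ of degree at most $2j+2$, it suffices to verify the identity for enough values of $i$, or to prove it directly by induction on $j$ using the Vandermonde/inclusion--exclusion identities that collapse the defining double sum; this is a purely mechanical but slightly delicate hypergeometric manipulation. (An alternative that bypasses the recurrence is to realise $V_i$ through the inclusion matrices $W_s\colon\mathbb{R}^{\binom{[n]}{s}}\to\mathbb{R}^{\binom{[n]}{k}}$ and compute the action of $A_j$ on $\mathrm{Im}(W_i)$ modulo $\mathrm{Im}(W_{i-1})$ by a direct count, whose leading term reproduces $E_j(i)$; the combinatorial bookkeeping is comparable in difficulty.)

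Finally, for the dual matrix I would use the standard trace identity $m_i P_j(i)=v_j Q_i(j)$, obtained from $\mathrm{tr}(A_jE_i)=m_iP_j(i)$ on one side and $\mathrm{tr}(A_jA_\ell)=v\,v_j\delta_{j\ell}$ on the other. The valency of the $j$-th relation is $v_j=\binom{k}{j}\binom{n-k}{j}$ (choose the $j$ points of the $k$-set to drop and the $j$ new points to add), and the multiplicity $m_i=\binom{n}{i}-\binom{n}{i-1}$ was already recorded above; substituting gives
\[
Q_i(j)=\frac{m_i}{v_j}P_j(i)=\frac{\binom{n}{i}-\binom{n}{i-1}}{\binom{k}{j}\binom{n-k}{j}}E_j(i),
\]
as required. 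The cases $P_j(0)=v_j$ and $Q_i(0)=m_i$ provide convenient consistency checks.
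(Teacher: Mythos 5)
The paper offers no proof of this statement at all: it is imported verbatim as Theorem 4.6 of Delsarte's thesis, with a pointer to \cite[Chapter 6]{gm} for the omitted computations. So there is no internal argument to compare against, and your outline is, in substance, exactly the standard proof that those references carry out: the multiplicity-free decomposition $\bigoplus_{i=0}^{k}S^{(n-i,i)}$ for $n\ge 2k$ (which matches Delsarte's standing hypothesis $k\le n/2$), the metric structure with $c_j=j^2$, $b_j=(k-j)(n-k-j)$, the Johnson-graph eigenvalues $\theta_i=(k-i)(n-k-i)-i$, and the trace identity $m_iP_j(i)=v_jQ_i(j)$. Your bookkeeping is consistent with the paper's reversed indexing ($A_j\leftrightarrow|A\cap B|=k-j$, so $A_1$ is indeed the Johnson graph and $v_j=\binom{k}{j}\binom{n-k}{j}$), your check $E_1(i)=(k-i)(n-k+1-i)-k=(k-i)(n-k-i)-i$ is correct, and the derivation of the $Q$-formula from the $P$-formula via $\mathrm{tr}(A_jE_i)$ is the standard and correct move.

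Two caveats, one of which matters. First, the minor one: the assignment of the eigenvalue $\theta_i$, and hence of the multiplicity $m_i=\binom{n}{i}-\binom{n}{i-1}$, to the $i$-th common eigenspace needs justification (e.g.\ via the inclusion matrices $W_i$, computing the action of $A_1$ on $\mathrm{Im}(W_i)$ modulo $\mathrm{Im}(W_{i-1})$); you mention this machinery only as an alternative, but the matching of multiplicities to rows of $P$ is required even on your main route, since the $Q$-formula is false if the $m_i$ are paired with the wrong eigenvalues. Second, the substantive one: your proposed shortcut of verifying the three-term recurrence ``at enough values of $i$'' fails as stated. Both sides have degree $2j+2$ in $i$, so you would need $2j+3$ evaluation points, but within the scheme $i$ ranges only over $\{0,\ldots,k\}$, which is too few once $j\ge(k-2)/2$; you must instead prove the recurrence as a polynomial identity in an indeterminate $x$ (and in $n$), by the induction/Vandermonde manipulation you allude to. That manipulation is precisely where the entire content of the identification $P_j(i)=E_j(i)$ lives, and your proposal defers it as ``mechanical but slightly delicate'' rather than executing it --- so what you have is a correct and well-organised reduction of the theorem to a known hypergeometric identity, not yet a complete proof of it.
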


\subsection{The main result}

%\begin{lemma}\label{atleast}
%Let $n\ge 9$, $k=4$ and $I\in\{\{1,3\},\{1,4\}\}$. Then $\alpha(\Gamma_{I})\ge 6$
%and $\omega(\Gamma_I)\ge 6$.
%\end{lemma}
%
%\begin{proof}
%Consider the graph $\Gamma_{ \{1\} }$, and fix a 3-subset $\Delta$ of $\{1,\ldots,n\}$.
%The set of all 4-subsets containing $\Delta$ forms a clique of $\Gamma_{\{1\}}$ of size $n-3$,
%and hence $\omega(\Gamma_{\{1\}})\ge 6$. Since a clique of $\Gamma_{\{1\}}$ is also a clique 
%for $\Gamma_I$, we have $\omega(\Gamma_I)\ge 6$.
%
%Now consider the graph $\Gamma_{ \{1,3,4\} }$; the complement of $\Gamma_{\{2\}}$.
%Consider the 4-subsets $\{\{1,2,5,6\},\{3,4,5,6\},\{1,3,6,7\},\{2,4,6,7\},\{1,4,5,7\},\{2,3,5,7\}\}$.
%Then they pairwise intersect in two elements and so form a clique of $\Gamma_{\{2\}}$.
%Hence $\alpha(\Gamma_{\{1,3,4\}})\ge 6$, and therefore, $\alpha(\Gamma_I)\ge 6$.
%\end{proof}

\begin{theorem}
Let $n\ge 10$ and $I\in \{ \{1,3,4\}, \{1,3\}, \{1,4\}, \{1,2,4\} \}$, and let $\Delta:=\Gamma(n,4,I)$.
Then $\omega(\Delta)\alpha(\Delta) < |V\Delta|=\binom{n}{4}$.
\end{theorem}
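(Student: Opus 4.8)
The plan is to treat the four index sets uniformly, as the four representatives (one per complementary pair) of those graphs in $J(n,4)$ that are \emph{not} of Johnson type $\Delta_t$ or $\Phi_t$; together with the Steiner/large-set cases these exhaust all nontrivial $\Gamma_I(n,4)$, so the theorem completes the $k=4$ classification. Since a coclique of $\Gamma_I$ is a clique of $\Gamma_{I^c}$, and the product $\omega\alpha$ is invariant under complementation, I would first record that the four complements are $I^c\in\{\{2\},\{2,4\},\{2,3\},\{3\}\}$, so each case concerns a pair of constrained (constant- or restricted-intersection) families. In particular $I=\{1,4\}$ is exactly the situation of Theorem~\ref{specialcase}, whose clique side is $\{0,3\}$-intersecting and hence has size at most $n$ by Corollary~\ref{nbound}; this will be used as an auxiliary constraint rather than as the whole argument.

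The core step is to argue by contradiction: suppose $\omega(\Delta)\alpha(\Delta)=\binom{n}{4}$. Applying Corollary~\ref{cor:ratiobound} to $\Delta=\Gamma_I$ and, via complementation, to $\Gamma_{I^c}$ gives the two exact identities
\[
\omega(\Gamma_I)=1-\frac{\deg(\Gamma_I)}{\tau_I},\qquad
\alpha(\Gamma_I)=\omega(\Gamma_{I^c})=1-\frac{\deg(\Gamma_{I^c})}{\tau_{I^c}},
\]
together with the divisibility requirements $\tau_I\mid\deg(\Gamma_I)$ and $\tau_{I^c}\mid\deg(\Gamma_{I^c})$. Here $\deg(\Gamma_I)=\sum_{j\in I}\binom{4}{4-j}\binom{n-4}{j}$ is the sum of the relevant valencies, while the full spectrum of $\Gamma_I$ is $\bigl\{\sum_{j\in I}E_j(i):i=0,\dots,4\bigr\}$, read off from the Eberlein polynomials, and $\tau_I=\min_{1\le i\le4}\sum_{j\in I}E_j(i)$ is its least eigenvalue. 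This reduces each of the four cases to a finite computation: tabulate the five eigenvalues of $\Gamma_I$ and of $\Gamma_{I^c}$ as polynomials in $n$, determine which eigenspace index achieves the minimum (for large $n$ a single index does, so $\tau_I$ is eventually a fixed polynomial), and then test the divisibility conditions and the product constraint.

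The contradiction should come in two regimes. For large $n$ the forced quantities $1-\deg(\Gamma_I)/\tau_I$ and $1-\deg(\Gamma_{I^c})/\tau_{I^c}$ are fixed rational functions of $n$, and their product cannot coincide identically with the degree-$4$ polynomial $\binom{n}{4}$; a comparison of leading behaviour, sharpened where needed by the combinatorial bounds (Corollary~\ref{nbound} for $I=\{1,4\}$, the constant-intersection/Deza--Frankl bound for the cases $I^c=\{2\},\{3\}$, and the Hilton--Milner estimate already used in Theorem~\ref{specialcase}), rules out all but finitely many $n$. For the finitely many remaining $n\ge10$ I would exhibit a failure of the divisibility $\tau_I\mid\deg(\Gamma_I)$, or check the product identity numerically. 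As an independent obstruction, Corollary~\ref{Delsarte_cor} shows that the MacWilliams transforms of a putative extremal clique and coclique must be design-orthogonal; computing their dual degree sets from the $Q$-matrix and showing these sets are forced to meet gives a second route to the contradiction when the ratio bound alone is indecisive.

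I expect the main obstacle to be twofold. First, correctly identifying $\tau_I$ (and $\tau_{I^c}$) as a function of $n$: the Eberlein values are sign-varying polynomials of several degrees, and in principle the minimising eigenspace could switch as $n$ grows, so the claim that $\tau_I$ stabilises to one polynomial must be verified rather than assumed. Second, cleanly disposing of the narrow range of $n$ just above $10$, where the asymptotic polynomial comparison is not yet decisive and one must lean on the exact divisibility test or a direct check. That this range is genuinely delicate is shown by $n=9$, $I=\{1,3\}$, where the overlarge set of $S(3,4,8)$'s yields a proper colouring of $\Gamma_{\{1,3\}}(4,9)$ by maximal cocliques and hence $\omega\alpha=\binom{9}{4}$; so no softer argument can succeed and the eigenvalue bound must be used at full strength precisely from $n=10$ onward.
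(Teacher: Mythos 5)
Your proposal is correct in outline, and for $I=\{1,3,4\}$ and $I=\{1,2,4\}$ it is essentially the paper's argument: there Corollary~\ref{cor:ratiobound} is applied to the single-class complements $\Gamma_{\{2\}}$ and $\Gamma_{\{3\}}$, the divisibility $\tau\mid\deg$ cuts the problem down to finitely many $n$, and GAP/GRAPE finishes. For $I=\{1,3\}$ and $I=\{1,4\}$ you take a genuinely different route: the paper never uses the ratio bound in those two cases, but instead invokes Corollary~\ref{Delsarte_cor}, writing the inner distributions of a hypothetical extremal clique and coclique as $u=(1,a,0,x-a-1,0)$ and $v=(1,0,b,0,y-b-1)$ (similarly for $\{1,4\}$), eliminating variables from the orthogonality equations $(uQ)_j(vQ)_j=0$ to force exact sizes such as $x=(n-1)(n-3)/3$, $y=n(n-2)/8$, and then killing these combinatorially: clique members through a fixed point form a $\{0,2\}$-intersecting family of $3$-sets, so Corollary~\ref{nbound} plus a double count gives $x\le n(n-1)/4$, a contradiction for $n\ge16$, leaving $10\le n\le 16$ to the computer. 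Your double-sided application of Corollary~\ref{cor:ratiobound} to $\Gamma_I$ and $\Gamma_{I^c}$, forcing $(1-\deg(\Gamma_I)/\tau_I)(1-\deg(\Gamma_{I^c})/\tau_{I^c})=\binom{n}{4}$, is legitimate (cocliques of $\Gamma_I$ are cliques of $\Gamma_{I^c}$ and the hypothesis transfers) and does deliver the contradiction: from the $P$-matrix the left side is $O(n^3)$ in all four cases while $\binom{n}{4}\sim n^4/24$, and integrality alone often short-circuits small cases (e.g.\ $I=\{1,4\}$, $n=12$: $\deg=102$, $\tau=-15$, so $\tau\nmid\deg$). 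What each buys: your method is uniform and avoids the delicate polynomial elimination; the paper's pins down the exact extremal sizes from the $Q$-matrix (exactly the phenomenon raised as a question in its Conclusion) and bounds the residual computational range explicitly. You rightly flag the two genuine delicacies: the minimizing eigenspace index switches at small $n$ (for $\Gamma_{\{1,4\}}$ the least eigenvalue is $-3$ for $n\le 11$ and the $i=1$ eigenvalue only from $n=12$), so $\tau_I$ must be verified before the divisibility test, and the ``finitely many $n$'' threshold must be made effective, after which your leftover checks are computational just as in the paper's tables; your closing remark that $n=9$, $I=\{1,3\}$ is a true extremal case matches Section~\ref{s3} and correctly explains why no softer argument can start below $n=10$.
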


\begin{proof}
A straight-forward calculation shows that the matrix $P$ of eigenvalues, and the dual matrix $Q$, 
of the Johnson scheme $J(n,4)$ are
{\footnotesize
\begin{align*}
P&=
\begin{pmatrix}
 1 & 4 (n-4) & 3 (n-5) (n-4) & \frac{2}{3} (n-6) (n-5) (n-4) & \frac{1}{24} (n-7) (n-6) (n-5) (n-4) \\
 1 & 3 n-16 & \frac{3}{2} (n-8) (n-5) & \frac{1}{6} (n-16) (n-6) (n-5) & -\frac{1}{6} (n-7) (n-6) (n-5) \\
 1 & 2 (n-7) & \frac{1}{2} ((n-21) n+92) & -(n-9) (n-6) & \frac{1}{2} (n-7) (n-6) \\
 1 & n-10 & -3 (n-8) & 3 n-22 & 7-n \\
 1 & -4 & 6 & -4 & 1 \\
\end{pmatrix},\\
Q&=
\begin{pmatrix}
 1 & n-1 & \frac{1}{2} (n-3) n & \frac{1}{6} (n-5) (n-1) n & \frac{1}{24} (n-7) (n-2) (n-1) n \\
 1 & \frac{1}{4}(3n-7)-\frac{3}{n-4} & \frac{(n-7) (n-3) n}{4 (n-4)} & \frac{(n-10) (n-5) (n-1) n}{24 (n-4)} & -\frac{(n-7) (n-2) (n-1) n}{24 (n-4)} \\
 1 & \frac{(n-8) (n-1)}{2 (n-4)} & \frac{(n-3) n ((n-21) n+92)}{12 (n-5) (n-4)} & -\frac{(n-8) (n-1) n}{6 (n-4)} & \frac{(n-7) (n-2) (n-1) n}{12 (n-5) (n-4)} \\
 1 & \frac{(n-16) (n-1)}{4 (n-4)} & -\frac{3 (n-9) (n-3) n}{4 (n-5) (n-4)} & \frac{(n-1) n (3 n-22)}{4 (n-6) (n-4)} & -\frac{(n-7) (n-2) (n-1) n}{4 (n-6) (n-5) (n-4)} \\
 1 & -\frac{4 (n-1)}{n-4} & \frac{6 (n-3) n}{(n-5) (n-4)} & -\frac{4 (n-1) n}{(n-6) (n-4)} & \frac{(n-2) (n-1) n}{(n-6) (n-5) (n-4)} \\
\end{pmatrix}.
\end{align*}
}

\begin{description}
\item[Case $I=\{1,3,4\}$:] 
The complement $\overline{\Delta}$ of $\Delta$ is a graph of an association scheme (because it is $\Gamma(n,4,\{2\})$.
Consider the third column of $P$.
The eigenvalues of $\overline{\Delta}$ are
\[
3 (n-5) (n-4),\frac{3}{2} (n-8) (n-5),\frac{1}{2} (n^2-21n+92),24-3n,6.
\]
The degree of $\overline{\Delta}$ is the first eigenvalue $3(n-5)(n-4)$. Most of the time, the fourth eigenvalue $24-3n$ is the smallest
eigenvalue, certainly when $n\ge 11$, when it is less than the third eigenvalue. So let us assume that $n\ge 11$.
By Corollary \ref{cor:ratiobound}, $24-3n$ divides $3 (n-5) (n-4)$, and hence $8-n$ divides $(n-5)(n-4)$.
So by elementary number theory, we have $n\in\{11,12,14,20\}$.

The cases $n=9, 10,11,12,14,20$ can be settled
each in turn. First, for $n=9$, the minimum eigenvalue of $\overline{\Delta}$ is $-8$, and the degree is $65$.
So by Corollary \ref{cor:ratiobound}, $\omega(\Delta)\alpha(\Delta) <\binom{n}{4}$.
For the remaining cases, we can calculate $\omega(\overline{\Delta})$ with
GAP/GRAPE \cite{gap,grape}\footnote{The share package GRAPE for the computer
algebra system GAP contains an efficient clique finder.} and compare it
to $1-\deg(\overline{\Delta})/\tau$, where $\tau$ is the smallest eigenvalue.
\begin{center}
\begin{tabular}{ccccc}
$n$ & $\omega(\overline{\Delta})$ & $1-\deg(\overline{\Delta})/\tau$\\ 
\hline
10 & 5 & 11 \\ %&28 & 70\\
11 & 6 & 15 \\ %&36 & 114\\
12 & 9 & 15 \\ %&45 &  90\\
14 & 13 & 16 \\ %&66 & 143\\
20 & 13 & 21 \\ %&153 & 2856\\
\hline
\end{tabular}
\end{center}
We find that $\omega(\overline{\Delta})$ is never equal to $1-\deg(\overline{\Delta})/\tau$ and so 
by Corollary \ref{cor:ratiobound}, $\omega(\Delta)\alpha(\Delta) <\binom{n}{4}$.

\item[Case $I=\{1,2,4\}$:]  
This time, the complement graph $\overline{\Delta}$ is $\Gamma(n,4,\{3\})$.
The eigenvalues of $\overline{\Delta}$ are
\[
\frac{2}{3} (n-6) (n-5) (n-4),\frac{1}{6} (n-16) (n-6) (n-5),-(n-9) (n-6),3 n-22,-4
\]
For $n\ge 13$, the third eigenvalue of $\overline{\Delta}$ is the smallest in the spectrum of $\overline{\Delta}$.
So by Corollary \ref{cor:ratiobound}, $n-9$ divides $\frac{2}{3} (n-5) (n-4)$
and hence $n\in\{13,14,17,19,29,49\}$ (by basic elementary number theory).
For $n\in\{9,11\}$, the smallest eigenvalue of $\overline{\Delta}$ does not divide the degree.
Thus by Corollary \ref{cor:ratiobound}, we are left with $n\in\{10,12,13,14,17,19,29,49\}$ to check.
The cases $n=10,12,13,14,17,19,29,49$ can be settled
each in turn with the computer algebra system GAP/GRAPE:
\begin{center}
\begin{tabular}{cccc}
$n$ & $\omega(\overline{\Delta})$ & $1-\deg(\overline{\Delta})/\tau$\\ 
\hline
10 & 2 & 5\\
12 & 3& 9\\
13 & 3& 13\\
14 & 3& 13\\
17 & 4& 14\\
19 & 4& 15\\
29 & 7& 21\\
49 & 12 & 34\\
\hline
\end{tabular}
\end{center}

\item[Case $I=\{1,3\}$:]

Let $Q$ be the dual matrix of eigenvalues. The second column of $Q$ is
\[
c=\frac{n-1}{4(n-4)}\left(4(n-4), 3 n-16,2(n-8),n-16,-16 \right).
\]

Let $u=(1,a,0,x-a-1,0)$ and $v=(1,0,b,0,y-b-1)$ such that $a,b\ge 0$, $x-a-1,y-b-1\ge 0$ and $xy={n \choose 4}$.
So $u$ and $v$ are the inner distributions of an arbitrary clique and coclique of $\Gamma_I$, respectively,
attaining the clique-coclique bound.
The second entries of $uQ$ and $vQ$ are 
\begin{align*}
(uQ)_1=uc^\top=&\frac{n-1}{4(n-4)} ((2 a+x+3)n-16 x)\\
(vQ)_1=vc^\top=&\frac{n-1}{4(n-4)} 2((b+2) n-8 y).
\end{align*}
By Corollary \ref{Delsarte_cor}, the product of these quantities is zero, which gives us
 two scenarios: $x=n\frac{2 a +3 }{16-n}$ or $y=n\frac{1}{8} (b+2)$.
We will consider the former, with  the additional assumption that $xy={n\choose 4}$.
This then yields an expression for $y$:
\[
y=
-\frac{(n-16) (n-3) (n-2) (n-1)}{48 a+72}.
\]
Now $a,y\ge 0$, which implies that $n\le 16$. So we will assume for the moment that $n\ge 16$.
Therefore, 
\[
x=\frac{(n-3) (n-2) (n-1)}{3 (b+2)},\quad y=\frac{1}{8} (b+2) n.
\]
We now consider the two equations $(uQ)_2(vQ)_2=0$ and $(uQ)_4(vQ)_4=0$,
with the above values of $x$ and $y$ substituted in:
\begin{align*}
(b (n-10)+6 (n-4)) (a (b+2) (n-8)+b (2 n-13)-(n-4) ((n-10) n+7))&=0\\
a (b+2) (n-8)+18 (n-4)-b (n^2-12 n+38)&=0.
\end{align*}
The second equation gives us a value for $a$, which we can substitute into the first equation.
This results in the following equation:
\[
(n-5) (b (n-10)+6 (n-4)) (b-n+4)=0.
\]
However, $n\ne 5$ and $b (n-10)+6 (n-4)\ne 0$ (as $b\ge 0$ and $n>10$), so it follows that
$b=n-4$ and hence
\[
x=\frac{(n-3)(n-1)}{3},\quad y=\frac{1}{8} (n-2) n.
\]
Now suppose we have a $\{1,3\}$-clique $S$ of size $x=(n-1)(n-3)/3$.
Consider the members of $S$ containing a point $p$, with $p$ removed from each. 
This is a family of $3$-sets of an $(n-1)$-set, any two intersecting in $0$ or $2$ elements. 
By Corollary~\ref{nbound}, we know that there are at most $n-1$ of them. 
Now the standard double count gives
\[
|S| \le n(n-1)/4.
\]
But this is smaller than $(n-1)(n-3)/3$ so long as $n$ is at least $16$. 
This leaves the cases $10\le n\le 16$ to be considered (since we have excluded
$n=9$).

\begin{center}
\begin{tabular}{cccc}
$n$ & $\omega(\Gamma_I)$ & $\alpha(\Gamma_I)$\\
\hline
10 & 9 & 14 \\
11 & 9 & 14 \\
12 & 9 & 15\\
13 & 9 & 15\\
13 & 13 & 15 \\
14 & 13 & 21 \\
15 & 13 & 21 \\
16 & 13 & 28 \\
\hline
\end{tabular}
\end{center}

\item[Case $I=\{1,4\}$:] In this case, Theorem~\ref{specialcase} shows that
the result holds for sufficiently large $n$; indeed, the proof there works for
$n\ge46$. However, the intervening values are far too large for computation,
so we use the Q-matrix methods.

Let $u=(1,a,0,0,x-a-1)$ and $v=(1,0,b,y-b-1,0)$ such that $a,b\ge 0$, $x-a-1,y-b-1\ge 0$ and $xy={n\choose 4}$.
So $u$ and $v$ are the inner distributions of an arbitrary clique and coclique of $\Gamma_I$, respectively,
attaining the clique-coclique bound.
The second coordinates of $uQ$ and $vQ$ are 
\begin{align*}
(uQ)_1=uc^\top=&\frac{(n-1)}{4 (n-4)} ((3 a+4) n-16 x),\\
(vQ)_1=vc^\top=&\frac{(n-1)}{4 (n-4)} ( (b+y+3)n-16 y).
\end{align*}
By Corollary \ref{Delsarte_cor}, the product of these quantities is zero, which gives us 
two scenarios:
\begin{enumerate}
\item[(i)]  $x=n\frac{3a+4}{16}$, or 
\item[(ii)] $y=n\frac{b+3}{16-n}$.
\end{enumerate}
Since $y\ge 0$, Case (ii) does not arise if we assume $n\ge 17$, which we will for now. So suppose we have Case (i).
If we now consider the equation $(uQ)_2(vQ)_2=0$, we have
\begin{equation}\label{eq2}
((n-1) (a (n-11)+2 (n-8))+24 x) ((n-1) (b (n-11)+6 n-39)-9 (n-9) y)=0.
\end{equation}
Assuming (i), that is $x=n(3a+4)/16$, Equation \eqref{eq2} becomes
\begin{equation}\label{eq3}
(2 (a+2) n-11 a-16) \left((n-1) (b (n-11)+6 n-39)-9 (n-9) y\right)=0.
\end{equation}
However, if the first term is zero, then
$a = (16 - 4 n)/(2n-11)$
which is negative for $n\ge 6$; a contradiction.
Therefore, the second term in Equation \eqref{eq3} is zero.
In other words, we have an expression for $b$ in terms of $n$ and $y$:
\[
b=\frac{-6 n^2+9 n y+45 n-81 y-39}{(n-11) (n-1)}.
\]
If we now consider the equation $(uQ)_3(vQ)_3=0$, upon substitution of our value for $b$, we 
obtain an equation relating $a$, $y$ and $n$:
{\footnotesize
\begin{equation}\label{3point5}
(a (n^2-18n+68)+4 (n-8) (n-4)) (2 n^4-34 n^3-9 n^2 y+181 n^2+123 n y-335 n-426 y+186)=0.
\end{equation}
}
Having the first term in Equation \eqref{3point5} equal to zero leads to a contradiction, since in this case we would have
\[
a=-\frac{4 \left(n^2-12 n+32\right)}{n^2-18 n+68}
\]
which is negative for $n\ge 13$. So let us now assume $n\ge 13$. Then 
the second term in Equation \eqref{3point5} is zero, which gives us
an expression for $y$ in terms of $n$ (and we also obtain an expression for $x$):
\begin{align*}
x&=\frac{n(n-3)  ( 3n^2 -41n+142)}{8 (2n^2 - 28n+93)},\\
y&=\frac{(n-1) (n-2) (2n^2 - 28n+93)}{9n^2- 123 n +426}.
\end{align*}
However, $x\le n$ (Corollary~\ref{nbound}), which implies that $n\le 8$; a contradiction.

%So (i) does not arise, and therefore we are in Case (ii):
%\begin{align*}
%y&=n\frac{b+3}{16-n},\\
%x&=-\frac{(n-16) (n-3) (n-2) (n-1)}{24 (b+3)}.
%\end{align*}
%Notice that $n\le 16$ in order for $x\ge 0$. 

This leaves us now to consider by computer the cases where $9\le n\le 16$.
In each case, we see that the clique number and coclique number have a product
that does not attain ${n\choose 4}$.

\begin{center}
\begin{tabular}{cccc}
$n$ & $\omega(\Gamma_I)$ & $\alpha(\Gamma_I)$\\
\hline
9 & 6 & 12\\
10 & 10& 15 \\
11 & 10& 15\\
12 & 10& 15\\
13 & 10& 15\\
14 & 11& 15\\
15 & 15& 15\\
16 & 15& 15\\
\hline
\end{tabular}
\end{center}
\end{description}
\end{proof}

\begin{cor}
For $k=4$, Conjectures~\ref{c1} and \ref{c2} hold for $n\ge10$.
\end{cor}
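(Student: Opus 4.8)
The plan is to derive the corollary from the main theorem of this section together with the easy ``Steiner $\Rightarrow$ non-separating'' direction of Section~2 and Wilson's sharpening of Erd\H{o}s--Ko--Rado. Recall that $J(n,4)$ is non-separating exactly when some non-trivial graph $\Gamma_I(n,4)$ satisfies $\omega(\Gamma_I)\alpha(\Gamma_I)=\binom{n}{4}$, and that, since clique and coclique numbers interchange under complementation, $\Gamma_I$ attains this bound if and only if its complement $\Gamma_{\{1,2,3,4\}\setminus I}$ does. Hence it suffices to inspect one graph from each complementary pair of non-trivial index sets.

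First I would list the seven complementary pairs of nonempty proper subsets of $\{1,2,3,4\}$. Three of them, namely $\{4\}\leftrightarrow\{1,2,3\}$, $\{3,4\}\leftrightarrow\{1,2\}$ and $\{2,3,4\}\leftrightarrow\{1\}$, are precisely the pairs $\Delta_t\leftrightarrow\Phi_t$ for $t=1,2,3$. The remaining four, $\{1,3,4\}\leftrightarrow\{2\}$, $\{1,2,4\}\leftrightarrow\{3\}$, $\{1,3\}\leftrightarrow\{2,4\}$ and $\{1,4\}\leftrightarrow\{2,3\}$, are exactly the pairs governed by the main theorem: it asserts that for $n\ge10$ none of $\Gamma_{\{1,3,4\}}$, $\Gamma_{\{1,2,4\}}$, $\Gamma_{\{1,3\}}$, $\Gamma_{\{1,4\}}$ attains the clique--coclique bound, and by the complementation remark this disposes of all four ``non-Steiner'' pairs simultaneously. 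Consequently, for $n\ge10$, the scheme $J(n,4)$ can fail to be separating only through one of the graphs $\Delta_t(n,4)$ with $t\in\{1,2,3\}$.

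Next I would argue that attainment of the bound in $\Delta_t(n,4)$ forces a Steiner system. For $t\in\{1,2,3\}$ the Wilson threshold $(t+1)(5-t)$ equals $8,9,8$, all below $10$, so for $n\ge10$ Wilson's theorem gives $\alpha(\Delta_t)=\binom{n-t}{4-t}$ with equality only for cocliques of EKR type. If $\omega(\Delta_t)\alpha(\Delta_t)=\binom{n}{4}$ then $\omega(\Delta_t)=\binom{n}{4}/\binom{n-t}{4-t}=\binom{n}{t}/\binom{4}{t}$, and a clique of this size is a family of that many $4$-sets, pairwise meeting in fewer than $t$ points; the $t$-subsets they contain are therefore all distinct and number $\binom{n}{t}$, so every $t$-set lies in exactly one clique member and the clique is the block set of an $S(t,4,n)$. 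Together with the case analysis above, this proves Conjecture~\ref{c1} for $k=4$ and $n\ge10$.

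Conjecture~\ref{c2}, being a biconditional, then follows in both directions. If $J(n,4)$ is non-separating then Conjecture~\ref{c1} supplies a Steiner system $S(t,4,n)$, and hence the divisibility conditions hold for that $t$; conversely, if the divisibility conditions hold for some $t\in\{1,2,3\}$ then a Steiner system exists (by Keevash's theorem, or more concretely by the classical determinations of Hanani for $k=4$, which hold throughout the range $n\ge10$), and the computation of Section~2 --- a block set is a clique and the $k$-sets through a fixed $t$-set an EKR coclique, with sizes multiplying to $\binom{n}{4}$ --- shows the scheme is non-separating. Once the main theorem is granted, the only real work is the bookkeeping of the second and third paragraphs: matching the four index sets of the theorem to exactly the four non-Steiner complementary pairs, and checking that the Wilson threshold lies below $10$ for every relevant $t$, so that the maximum cocliques are pinned down as EKR type and the counting step applies. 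That is where I would expect to need the most care.
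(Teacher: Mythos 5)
Your proposal is correct and follows essentially the same route as the paper, whose one-line proof (main theorem plus Hanani's existence results) implicitly relies on exactly your bookkeeping: the complementation-invariance of $\omega\cdot\alpha$ reduces everything to the four pairs of the main theorem plus the three $\Delta_t/\Phi_t$ pairs, Wilson's theorem pins the maximum cocliques of $\Delta_t(n,4)$ to EKR type for $n\ge10$ so that attainment of the bound forces a Steiner system $S(t,4,n)$, and Hanani's theorems convert existence into the divisibility conditions. Your only wrinkle --- invoking Keevash as an alternative for the converse --- would not by itself give the explicit threshold $n\ge10$, but you correctly fall back on Hanani, as the paper does.
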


This follows from the main theorem together with well-known results of Hanani
on the existence of Steiner systems $S(t,4,n)$ \cite{hanani34,hanani24}. Note
that $n=9$ is a genuine exception, as we saw in Section~\ref{s3}.

We conclude that, if $n\ge10$, then the permutation group induced by $S_n$
on $4$-sets fails to be separating if and only if one of the following holds:
\begin{itemize}
\item $n\equiv0\pmod{4}$ (so that $S(1,4,n)$ exists),
\item $n\equiv1$ or $4\pmod{12}$ (so that $S(2,4,n)$ exists),
\item $n\equiv2$ or $4\pmod{6}$ (so that $S(3,4,n)$ exists);
\end{itemize}
summarising, $n\equiv 0,1,2,4,8,10\pmod{12}$. What about synchronization?
Wilson's theorem~\cite{wilson} ensures that maximal cocliques are of EKR type,
so $G$ is non-synchronizing if and only if a large set of Steiner systems
exists. Baranyai's theorem~\cite{baranyai} ensures that $G$ is not
synchronizing if $n\equiv0\pmod{4}$, but existence of large sets of Steiner
systems in the other cases is unresolved (except for $t=2$, $n=13$, \cite{km}).

In particular, the existence of a Steiner system $S(3,4,10)$ shows that the
symmetric group $S_{10}$ acting on $4$-sets is non-separating. However, it is
synchronizing. Our above results show that synchronization could only fail if
there were a large set of seven pairwise disjoint $S(3,4,10)$ systems. However,
Kramer and Mesner~\cite{km2} showed that there cannot be more than five such
systems.

Thus, for $k=4$, synchronization and separation fail to be equivalent for
$S_n$ on $k$-sets, unlike the cases $k=2$ and $k=3$ described earlier.

\section{Projective planes}

The constructions in Section~\ref{s3} involved the fact that, in certain
Steiner systems, certain cardinalities of block intersection do not occur.
There are relatively few examples of such systems: the only ones known are
projective planes, $S(3,4,8)$, $S(4,5,11)$, $S(5,6,12)$, $S(3,6,22)$,
$S(4,7,23)$ and $S(5,8,24)$.

A \emph{projective plane} of order $q$ is a Steiner system $S(2,q+1,q^2+q+1)$
for some integer $q>1$. Projective planes of all prime power orders exist,
and none are known for other orders.

A projective plane has the property that any two of its blocks meet in a point.
Hence it is a clique in either of the graphs $\Gamma_{\{{}>k-2\}}(n,k)$ or
$\Gamma_{k-1}(n,k)$, with $k=q+1$ and $n=q^2+q+1$. In the first of these graphs,
cocliques of maximum cardinality are of EKR type, and we have
\[\binom{q^2+q-1}{q-1}\cdot(q^2+q+1)=\binom{q^2+q+1}{q+1},\]
so non-separation holds for this graph: these sets (all $k$-sets containing
two given points) also show non-separation for $\Gamma_1(n,k)$. Also, by
Theorem~\ref{t:ekr_col}, we cannot partition the $k$-sets into subsets of
EKR type.

In the case $q=2$, in our example above, we observed that there were other
cocliques, so that the possibility of a colouring with $q^2+q+1$
colours cannot be ruled out; and indeed we saw that such a colouring exists.

\begin{conj}
For $q>2$, a coclique of maximum size in the graph $\Gamma_q(q^2+q+1,q+1)$
must consist of all the $(q+1)$-sets containing two given points; so the
chromatic number of this graph is strictly larger than $q^2+q+1$.
\end{conj}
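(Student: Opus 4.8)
The plan is to prove the stronger, quantitative statement that $\alpha(\Gamma_q(n,q+1))=\binom{n-2}{q-1}$ for $n=q^2+q+1$ and $q>2$, \emph{with} the extremal cocliques being exactly the pair-stars, and then to read off the chromatic bound formally. The lower bound is immediate: the $\binom{n-2}{q-1}$ sets containing a fixed pair $\{a,b\}$ pairwise meet in at least two points, so they form a coclique. For the chromatic statement I would first note the trivial bound $\chi(\Gamma_q)\ge |V\Gamma_q|/\alpha(\Gamma_q)=\binom{n}{q+1}\big/\binom{n-2}{q-1}=q^2+q+1$, using the identity recorded before Conjecture~\ref{c1}. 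If equality held, a proper colouring with $q^2+q+1$ colours would split the $\binom{n}{q+1}$ vertices into that many cocliques whose sizes sum to $(q^2+q+1)\binom{n-2}{q-1}=\binom{n}{q+1}$; since each class has size at most $\alpha=\binom{n-2}{q-1}$, \emph{every} class would have to be a maximum coclique, hence (granting the characterisation) a full pair-star. But for $q\ge3$ any two distinct pairs $P\ne P'$ satisfy $|P\cup P'|\le 4\le q+1$, so some $(q+1)$-set contains both and lies in both pair-stars; thus no two distinct full pair-stars are disjoint, and a partition into $q^2+q+1\ge2$ of them is impossible. This contradiction gives $\chi(\Gamma_q)>q^2+q+1$, so everything reduces to the characterisation of the maximum cocliques.

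For the value of $\alpha$ I would use Delsarte's machinery exactly as in Section~\ref{kis4}. The graph $\Gamma_q=\Gamma_{\{k-1\}}$ (with $k=q+1$) has degree $d=(q+1)\binom{q^2}{q}$ and eigenvalues $E_{k-1}(i)$ for $0\le i\le k$. A direct computation with the Eberlein polynomials should show that the least eigenvalue is $\tau=-\tfrac1q\binom{q^2}{q}$ (one checks $\tau=-3$ for $q=2$ and $\tau=-28$ for $q=3$, and that no $E_{k-1}(i)$ is smaller), attained on the eigenspaces indexed $1$ and $2$. Feeding $\tau$ into the Lov\'asz ratio bound used to prove Corollary~\ref{cor:ratiobound} gives
\[
\alpha(\Gamma_q)\le\frac{|V\Gamma_q|}{1-d/\tau}=\frac{\binom{n}{q+1}}{q^2+q+1}=\binom{n-2}{q-1},
\]
so the bound is tight. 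Moreover the equality case of the ratio bound forces the characteristic vector of any maximum coclique $S$ to lie in the span of the all-ones vector together with the $\tau$-eigenspace; since $\tau$ is attained only in the eigenspaces indexed $1$ and $2$, this says precisely that the dual degree set of $S$ is contained in $\{0,1,2\}$, i.e. the indicator of $S$ is a Boolean function of dual degree at most $2$ on $J(n,q+1)$.

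The hard part, and the real content of the conjecture, is the final step: showing that for $q>2$ the only cocliques of $\Gamma_q$ attaining $\binom{n-2}{q-1}$ are the pair-stars. This is exactly where the projective-plane parameters $n=q^2+q+1$ must enter, and where the $q=2$ exception must be allowed to survive: the Fano configuration of Section~\ref{s3} (a line with the four triples disjoint from it) is a non-star maximum coclique, so any valid argument has to break at $q=2$. I would attack the characterisation by splitting a maximum coclique $S$ according to whether it is intersecting. If $S$ is intersecting it is $2$-intersecting (intersection~$1$ being forbidden), and since $n=q^2+q+1>3q=(t{+}1)(k{-}t{+}1)$ for all $q\ge2$ with $t=2$, Frankl's $t$-intersecting Erd\H{o}s--Ko--Rado theorem (equivalently the Delsarte linear-programming bound in the Johnson scheme) forces $S$ to be the unique pair-star. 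The residual and decisive case is when $S$ contains two disjoint $(q+1)$-sets $A,B$, where I must show $|S|<\binom{n-2}{q-1}$. The dimension count that lets a third set $C$ satisfy both $|A\cap C|\ge2$ and $|B\cap C|\ge2$ only when $q\ge3$ is precisely what distinguishes this regime from $q=2$. I expect this bound to follow either from a Hilton--Milner-type stability estimate for the forbidden-intersection problem, or by pairing $S$ against an explicit tight clique (a projective plane, when $q$ is a prime power) and exploiting the design-orthogonality of Corollary~\ref{Delsarte_cor} against the dual-degree-$\le 2$ constraint already obtained. Establishing this disjoint-pair bound uniformly in $q$ is the obstacle that the conjecture leaves open.
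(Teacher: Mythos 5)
You should first be aware that this statement is one the paper does \emph{not} prove: it is stated as a conjecture, supported only by the remark that ``a simple computation shows the conjecture is true for $q=3$ and for $q=4$.'' So there is no paper proof to match, and your proposal, by your own explicit admission in its final sentences, is not a proof either. The genuine gap is exactly where you locate it: you must show that a coclique of $\Gamma_q(q^2+q+1,q+1)$ of size $\binom{n-2}{q-1}$ cannot contain two disjoint $(q+1)$-sets, and nothing in your argument (or in the paper) delivers this. The natural tool, the Frankl--F\"uredi theorem on forbidding exactly one intersection \cite{ff}, characterises the pair-stars as extremal only for $n$ sufficiently large relative to $k$, whereas here $n=q^2+q+1$ is quadratic in $k=q+1$ --- precisely the delicate regime, as the $q=2$ counterexample (the Fano coclique of Section~\ref{s3}, a line plus the four triples disjoint from it, also of size $5=\binom{5}{1}$) shows any uniform argument must fail at the boundary. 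Your dual-degree-$\le 2$ constraint from ratio-bound equality does not by itself exclude non-star extremal families, since the $q=2$ example satisfies it too. So what you have written is a reduction of the conjecture to an open forbidden-intersection problem, not a resolution of it.

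That said, the scaffolding you build is sound and partly goes beyond what the paper records. Your chromatic reduction is essentially the paper's own observation (via Theorem~\ref{t:ekr_col} with $t=2$: since $k=q+1\ge 2t$ for $q\ge 3$, any two pair-stars intersect, so no partition into them exists), and your dispatch of the intersecting case via Wilson's exact theorem \cite{wilson} is correct, since a maximum intersecting coclique is $2$-intersecting and $n=q^2+q+1>3q$ for $q\ge 2$. Your eigenvalue claim is consistent with the paper's data: from the $P$-matrix of $J(n,4)$ in Section~\ref{kis4}, at $q=3$, $n=13$ one reads off $E_3(1)=\frac16(n-16)(n-6)(n-5)=-28$ and $E_3(2)=-(n-9)(n-6)=-28$, matching $-\frac1q\binom{q^2}{q}$ and giving $1-d/\tau=13$; similarly $\tau=-3$ for $q=2$. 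But beware that you have verified the identity $\tau=-\frac1q\binom{q^2}{q}$, and that no other $E_{k-1}(i)$ lies below it, only at $q=2,3$; for general $q$ this is an unproved lemma about Eberlein polynomials that your write-up merely asserts (``should show''). In summary: the two secondary steps (least eigenvalue, chromatic consequence) are plausible or correct but the central characterisation of maximum cocliques remains exactly as open in your proposal as it is in the paper.
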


A simple computation shows that the conjecture is true for $q=3$ and for $q=4$.

On the other hand, the truth of this conjecture would probably not give an
infinite family of examples which are synchronizing but not separating.
Magliveras conjectured that large sets of projective planes of any order $q>2$
exist; the existence is shown for $q=3$ and $q=4$ in \cite{km}.

\section{Conclusion}

The computations with rational functions in Section~\ref{kis4} were performed
with Mathematica~\cite{mathematica}; computations of clique number in special
cases were done with \textsf{GAP} and its package \texttt{GRAPE}, as already
noted.

The techniques we used to prove the separation conjecture for $k=4$ are
amenable to the use of traditional computer algebra systems such as
Mathematica, and it should be possible to settle several more values of $k$.

In the arguments for $I=\{1,3\}$ and $I=\{1,4\}$, we saw that the sizes of a
clique and a coclique whose product is equal to the number of vertices can be
determined from the Q-matrix of the association scheme. Is this true in general?
(It does hold for two-class association schemes, that is, strongly regular
graphs.)

The synchronization question is likely to be harder; general results on the
existence or nonexistence of large sets of Steiner systems are likely to 
require a significant new idea, and even particular cases involve quite large
computations with \textsf{GAP} or more specialised software. Perhaps new
techniques in hypergraph decomposition will help.

There seems plenty of scope for extending these results to other primitive
association schemes.

%\bibliographystyle{plain}
%\bibliography{refs}

\begin{thebibliography}{99}

\bibitem{acs}
J. Ara\'ujo, P. J. Cameron, and B. Steinberg,
Between primitive and 2-transitive: Synchronization and its friends, 
\textit{Europ. Math. Soc. Surveys}, in press.

\bibitem{rab}
R. A. Bailey,
\textit{Association Schemes: Designed Experiments, Algebra and Combinatorics},
Cambridge Studies in Advanced Mathematics \textbf{84},
Cambridge Univ. Press, Cambridge, 2004.

\bibitem{baranyai}
Zs. Baranyai,
On the factorization of the complete uniform hypergraph,
\textit{Infinite and finite sets} (Colloq., Keszthely, 1973), Vol. I,
pp. 91--108, \textit{Colloq. Math. Soc. Janos Bolyai}, \textbf{10},
North-Holland, Amsterdam, 1975.

\bibitem{bs}
D. R. Breach and A. P. Street,
Partitioning sets of quadruples into designs II,
\textit{J. Combinatorial Math. Combinatorial Computing} \textbf{3} (1988),
41--48.

\bibitem{cp}
P. J. Cameron and C. E. Praeger, 
Partitioning into Steiner systems,
\textit{Combinatorics '88} (ed. A. Barlotti et al.), Mediterranean Press,
Roma, 1992, pp. 61--71.

\bibitem{cayley}
A. Cayley,
On the triadic arrangements of seven and fifteen things,
\textit{Philosophical Mag.} \textbf{37} (1850), 50--53.

\bibitem{delsarte}
P. Delsarte,
An algebraic approach to the association schemes of coding theory,
\textit{Philips Research Reports Suppl.} \textbf{10} (1973). 

\bibitem{ekr}
P. Erd\H{o}s, C. Ko and R. Rado,
Intersection theorems for systems of finite sets,
\textit{Quart. J. Math. Oxford} (2) \textbf{12} (1961), 313--320.

\bibitem{ff}
P. Frankl and Z. F\"uredi,
Forbidding just one intersection,
\textit{J. Combinatorial Theory} (A) \textbf{39} (1983), 160--176. 

\bibitem{gap}
The GAP Group, GAP -- Groups, Algorithms, and Programming, Version 4.8.7; 2017. \url{http://www.gap-system.org}

\bibitem{gm}
C. Godsil and K. Meagher,
\textit{Erd\H{o}s--Ko--Rado Theorems: Algebraic Approaches},
Cambridge Studies in Advanced Mathematics \textbf{149},
Cambridge Univ. Press, Cambridge, 2016.

\bibitem{hanani34}
H. Hanani, 
On quadruple systems,
\textit{Canad. J. Math.} \textbf{12} (1960), 145--157.

\bibitem{hanani24}
H. Hanani,
The existence and construction of balanced incomplete block designs,
\textit{Ann. Math. Stat.} \textbf{3}2 (1961), 361--386.

\bibitem{hm}
A. J. W. Hilton and E. C. Milner,
Some intersection theorems for systems of finite sets,
\textit{Quart. J. Math. Oxford} Ser. (2) (1967), 369--384.

\bibitem{keevash}
P. Keevash,
The existence of designs,
\url{https://arxiv.org/abs/1401.3665}

\bibitem{kirkman}
T. P. Kirkman,
On a problem in combinations,
\textit{Cambridge and Dublin Math. J.} \textbf{2} (1847), 713--724.

\bibitem{km}
E. Koloto\u{g}lu and S. S. Magliveras,
On large sets of projective planes of orders $3$ and $4$,
\textit{Discrete Math.} \textbf{313} (2013), 2247--2252.

\bibitem{km2}
E. S. Kramer and D. M. Mesner,
Intersections among Steiner systems,
\textit{J. Combinatorial Theory} (A) \textbf{16} (1974), 273--285.

\bibitem{lovasz}
L. Lov\'asz,
On the Shannon capacity of a graph,
\textit{IEEE Trans. Inform. Theory} \textbf{25} (1979), 1--7.

\bibitem{grape}
L. H. Soicher,
GRAPE (Version 4.7) Package for GAP,
\url{http://www.maths.qmul.ac.uk/~leonard/grape/}

\bibitem{teirlinck}
L. Teirlinck,
A completion of Lu's determination of the spectrum of large sets of Steiner
triple systems,
\textit{J. Combinatorial Theory} (A) \textbf{57} (1991), 302--305.

\bibitem{wilson}
Richard M. Wilson,
The exact bound in the Erd\H{o}s--Ko--Rado theorem,
\textit{Combinatorica} \textbf{4} (1984), 247--257. 

\bibitem{mathematica}
Wolfram Research, Inc., Mathematica, Version 11.2, Champaign, IL (2017).

\end{thebibliography}
%\end{document}

\end{document}